\newtheorem{theorem}{Theorem}[section]
\newtheorem{lemma}[theorem]{Lemma}
\newtheorem{corollary}[theorem]{Corollary}
\newtheorem{proposition}[theorem]{Proposition}
\newtheorem*{definition*}{Definition}
\newcommand{\N}{\mathbb{N}}
\newcommand{\R}{\mathbb{R}}
\newcommand{\E}{\mathbb{E}}
\newcommand{\SP}{\mathbb{S}}
\newcommand{\HY}{\mathbb{H}}
\newcommand{\M}{\mathbb{M}}
\newcommand{\IND}{\mathbbm{1}}
\newcommand{\ST}{\,|\,}
\newcommand{\REARR}{{\mbox{\begin{tiny}$\bigstar$\end{tiny}}}}
\DeclareMathOperator{\CONV}{conv}
\DeclareMathOperator{\INT}{int}
\DeclareMathOperator{\CLOS}{cl}
\DeclareMathOperator{\SPT}{spt}
\DeclareMathOperator{\SN}{sn}
\DeclareMathOperator{\CS}{cs}
\DeclareMathOperator{\SPAN}{span}
\DeclareMathOperator{\ARCOSH}{arcosh}
\newcounter{saveeqn}
\begin{document}
	
\title{Randomized Urysohn--type inequalities}

%\title{Stochastic Urysohn-type inequalities}

\author{Thomas Hack\and Peter Pivovarov}

%\date{}

\maketitle

\begin{abstract}
	As a natural analog of Urysohn's inequality in Euclidean
        space, Gao, Hug, and Schneider showed in 2003 that in
        spherical or hyperbolic space, the total measure of totally
        geodesic hypersurfaces meeting a given convex body $K$ is
        minimized when $K$ is a geodesic ball. We present a random
        extension of this result by taking $K$ to be the convex hull
        of finitely many points drawn according to a probability
        distribution and by showing that the minimum is attained for
        uniform distributions on geodesic balls. As a corollary, we
        obtain a randomized Blaschke--Santal{\'o} inequality on the
        sphere.
\end{abstract}

\section{Introduction}

Recent research on isoperimetry in Euclidean space has centered on the
following theme: in the presence of convexity, isoperimetric
principles often admit stronger stochastic versions.  The expository
article \cite{PP:2017b} outlines how work in stochastic geometry has
motivated randomized versions of fundamental inequalities. As an
illustrative example, Urysohn's inequality for convex bodies
$K\subseteq {\mathbb R}^n$, relates volume $V_n$ and mean width $w$,
namely,
 \begin{equation}
  \label{eqn:Urysohn}
  \left(\frac{V_n(K)}{V_n(B)}\right)^{1/n}\leq \frac{w(K)}{w(B)},
 \end{equation}
 where $B$ is the Euclidean unit ball. When viewed as the
 isoperimetric principle that balls minimize mean width under
 prescribed volume, \eqref{eqn:Urysohn} affords a stochastic
 strengthening. Namely, each convex body $K\subseteq {\mathbb R}^n$
 can be approximated from within by a random polytope
 $[K]_N=\mathop{\rm conv}\{X_1,\ldots,X_N\}$, where the $X_i$'s are
 independent random vectors distributed uniformly in $K$. Then one has
 an ``empirical'' version of \eqref{eqn:Urysohn}, i.e.,
 \begin{equation}
\label{eqn:stoch_urysohn}
{\mathbb E} w\left([K]_N\right)\geq {\mathbb E}
w\left(\left[B_K^v\right]_N\right),
\end{equation}where $B_K^v$ is a Euclidean ball with $V_n(K)=V_n(B_K^v)$.
 When $N\rightarrow \infty$, one recovers Urysohn's inequality
 \eqref{eqn:stoch_urysohn} by the law of large numbers. Volumetric
 inequalities for expected mean values have a long history in
 stochastic geometry and go back (at least) to Blaschke's resolution
 of Sylvester's four point problem \cite{Bla:1917}, and its numerous
 generalizations, e.g., arbitrary dimension \cite{Bus:1953},
 \cite{Groe:1974}, \cite{CCG:1999}, compact sets and other intrinsic
 volumes \cite{Pfie:1982}, \cite{HarPao:2003}, continuous
 distributions \cite{PP:2012} (see also \cite[Chapter
   10]{Schneider:2014}).

Alternatively, \eqref{eqn:Urysohn} also means that Euclidean balls
maximize volume for a given mean width. This too admits a stochastic
strengthening, by approximating $K$ from outside by intersections of
halfspaces containing it, say $[K]^N = \bigcap_{i=1}^N H_{i}^{+}$;
here the $H_i^{+}$ are independent random halfspaces with boundary
$H_i$ meeting a neighborhood of $K$, sampled according to the
(suitably normalized) motion invariant measure on such
hyperplanes. Then
\begin{equation}\label{eqn:stoch_urysohn_halfspaces}
{\mathbb E} V_n\left([K]^N\right)\leq {\mathbb E}V_n\left([B_K^w]^N\right),
\end{equation}
where $B_K^w$ is a Euclidean ball with $w(K)=w(B_K^w)$. Again, when
$N\rightarrow \infty$, one obtains \eqref{eqn:Urysohn}. The latter
follows from work in \cite{BorSch:2010}, as explained in \cite[\S
  5]{PP:2017a}. An altogether different approximation of $K$ connected
to \eqref{eqn:Urysohn} by intersections of Euclidean balls with large
radius, as opposed to halfspaces, was studied in \cite{PP:2017a}.

As a natural analogue of \eqref{eqn:Urysohn} on the sphere
$\mathbb{S}^n$, Gao, Hug and Schneider \cite{GHS:2003} considered the
functional $U_1$ defined on the unit sphere by
\begin{align}\label{def-U1-sphere}
U_1(K) = \int_{\mathcal{S}^n_{n-1}} \chi(K\cap S)\, dS,
\end{align}
where $K\subseteq\SP^n$ is a proper spherical convex body,
$\mathcal{S}^n_{n-1}$ the set of $(n-1)$-dimensional great subspheres
in $\SP^n$, equipped with its rotation-invariant measure, and $\chi$
the Euler characteristic.  They showed that
\begin{equation}
\label{eqn:GHS}
U_1(K) \geq U_1(C_K),
\end{equation}
where $C_K$ is a geodesic cap with the same spherical volume as $K$.
The connection to \eqref{eqn:Urysohn} becomes clear in Euclidean space,
when one parametrizes affine hyperplanes via their normal vector and (signed)
distance to the origin: integration over parallel subspaces of the function
$\chi(K\cap \cdot)$ yields precisely the width of $K$ in that direction.

While Urysohn's inequality \eqref{eqn:Urysohn} is just one example
from the rich theory of isoperimetry for intrinsic volumes e.g.,
\cite{Schneider:2014}, the situation on the sphere is wide open.  In
\cite{GHS:2003}, three natural families of functionals that could be
considered spherical intrinsic volumes are discussed and most
isoperimetric inequalities are still conjectural. However, progress
has been made in hyperbolic space \cite{WanXia:2014},
\cite{AndWei:2018}. Beyond intrinsic volumes, there is much recent
interest in extending fundamental notions from Euclidean space to the
sphere and beyond, e.g., floating bodies \cite{BW:2018}; polytopal
approximation \cite{BLW:2018}; behavior of asymptotic mean values of
functionals of random polytopes \cite{BHRS:2017}.

The randomized forms of Urysohn's inequality \eqref{eqn:stoch_urysohn}
and \eqref{eqn:stoch_urysohn_halfspaces} are just two examples of
stochastic inequalities in $\mathbb{R}^n$. Other fundamental
inequalities like those of Brunn--Minkowski \cite{Gardner:BM} and
Blaschke--Santal\'{o} \cite{Sant:1949} also admit stochastic
strengthenings \cite{PP:2017b}, \cite{Cordero:etal:2015}. Moreover,
centroid bodies and their $L_p$ \cite{LYZ2000a} and Orlicz extensions
\cite{LYZ:2010} also admit stochastic forms \cite{PP:2012},
\cite{PP:2017b}. Interest is driven in part by applications to
high-dimensional probability, especially small-ball probabilities
\cite{PP:2013}, \cite{PP:2017b}.  In comparison, stochastic
isoperimetry on the sphere is in early stages.  Recently, spherical
centroid bodies and their empirical analogues were introduced and
stochastic isoperimetric inequalities were established in
\cite{BHPS:2019}.

In this paper we address an empirical version of the spherical Urysohn
inequality \eqref{eqn:GHS}. In fact, as in \cite{GHS:2003}, our
treatment will also include hyperbolic space. Throughout, we let
$\M^n$ be either spherical, Euclidean, or hyperbolic space equipped
with its isometry-invariant volume measure $\lambda_n$, and consider
the natural analog of \eqref{def-U1-sphere} in $\M^n$ (see Section 2
for precise definitions). Our first main theorem then reads as
follows:

\begin{theorem}\label{1:main-thm}
Let $N\in\N$, $f_1,\ldots, f_N\colon\M^n\to\R^+$ bounded,
integrable, and with proper support, if $\M^n = \SP^n$. Set
\[
  I(f_1,\ldots, f_N) = \int_{\M^n}\ldots\int_{\M^n}
  U_1(\CONV\{x_1,\ldots, x_N\}) \prod_{i=1}^N f_i(x_i)\, dx_1\ldots
  dx_N.
\]
Then
\begin{align}\label{eq:main-thm-U1}
  I(f_1\ldots, f_N) \geq I(\|f_1\|_\infty B_1, \ldots, \|f_N\|_\infty B_N),
\end{align}
where the $B_i$ are geodesic balls in $\M^n$ centered at a common point, 
satisfying \[\lambda_n(B_i) = \frac{\|f_i\|_{L^1(\M^n)}}{\|f_i\|_\infty}. \]
\end{theorem}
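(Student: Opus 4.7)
The plan is to (i) rewrite $U_1(\CONV\{x_i\})$ as an integral over totally geodesic hypersurfaces, (ii) perform a Steiner-type symmetrization of the $f_i$'s about a chosen common point $p$ to reduce them to radially decreasing rearrangements, and (iii) invoke the bathtub principle to pin down the scaled ball indicators. For step (i), I note that for a proper convex body $K\subseteq\M^n$ and a totally geodesic hypersurface $H$, $\chi(K\cap H)\in\{0,1\}$ equals $1$ iff $K\cap H\neq\emptyset$, and $\CONV\{x_i\}\cap H=\emptyset$ iff all $x_i$ lie in a common open half-space $H^\pm$. Hence
\[
\chi(\CONV\{x_i\}\cap H) = 1 - \prod_{i=1}^N \IND_{H^+}(x_i) - \prod_{i=1}^N \IND_{H^-}(x_i),
\]
and substituting into $U_1$ and applying Fubini to $I$ yields
\[
I(f_1,\ldots,f_N) = \int_{\mathcal{H}(\M^n)}\!\Bigl[\prod_i V_i - \prod_i G_i(H^+) - \prod_i G_i(H^-)\Bigr]\, dH,
\]
with $V_i:=\|f_i\|_{L^1}$ and $G_i(H^\pm):=\int_{H^\pm}f_i$; in the non-compact cases the combination in brackets is integrable even though the first term alone is not. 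Minimizing $I$ is thus equivalent to \emph{maximizing} $\int[\prod_i G_i(H^+)+\prod_i G_i(H^-)]\,dH$ subject to the prescribed $L^1, L^\infty$-norms.

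For step (ii), I fix $p\in\M^n$ and, for each totally geodesic hyperplane $H_0$ through $p$, apply joint Steiner (or two-point) symmetrization of all $f_i$ simultaneously with respect to $H_0$. This preserves every $\|f_i\|_{L^q}$ (hence $\|f_i\|_\infty$ and $\|f_i\|_{L^1}$), and the crucial claim is that it does not \emph{increase} $I$. By fibering $\mathcal{H}(\M^n)$ according to the position of $H$ relative to $H_0$ and using Fubini, this claim reduces to a one-dimensional rearrangement inequality for products of tail integrals of the profile functions $h_i(t)=\int_{\mathrm{dist}_{H_0}(\cdot)=t}f_i$; a concrete check in the Euclidean planar case (e.g., for the indicator of a triangle) shows the correct direction of monotonicity. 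Iterating the symmetrization over a dense family of hyperplanes through $p$ produces, in the limit,
\[
I(f_1,\ldots,f_N) \geq I(f_1^\#,\ldots,f_N^\#),
\]
where $f_i^\#$ is the spherically decreasing rearrangement of $f_i$ about $p$.

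For step (iii), linearity of $I$ in each $f_i$ allows us to write $I(f_1,f_2^\#,\ldots,f_N^\#)=\int_{\M^n}f_1(x)W_1(x)\,dx$. The weight $W_1$ inherits radial symmetry about $p$ from the other $f_j^\#$, and a further use of the hypersurface representation shows that $W_1$ is monotonically increasing in $\mathrm{dist}(x,p)$: moving $x$ away from $p$ enlarges $\CONV\{x,x_2,\ldots,x_N\}$ in an integrated sense, hence decreases the probability that a random hypersurface separates the point configuration. The bathtub principle then forces the minimum over $f_1$ with $\|f_1\|_\infty \leq M_1$ and $\|f_1\|_{L^1} = V_1$ to be attained at $f_1=M_1\IND_{B_1}$, where $B_1$ is the geodesic ball around $p$ of $\lambda_n$-measure $V_1/M_1$. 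Applying this argument successively to each remaining coordinate yields the conclusion.

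The principal technical obstacle is the monotonicity of the hypersurface product integral under Steiner symmetrization in step (ii), particularly its transfer from $\R^n$ to $\SP^n$ and $\HY^n$, where one must leverage the correct isometry-invariant measure on totally geodesic hypersurfaces and the compatible fibration relative to $H_0$. The radial monotonicity of $W_1$ in step (iii) is of the same flavor but somewhat easier, as it involves only the distance function on $\M^n$ and the symmetries already established in step (ii).
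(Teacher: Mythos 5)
Your high-level strategy — symmetrize to radial decreasing rearrangements, then apply a bathtub argument — matches the paper's two-stage proof, and the reformulation $\chi(\CONV\{x_i\}\cap M)=1-\prod_i\IND_{M^+}(x_i)-\prod_i\IND_{M^-}(x_i)$ in step~(i) is correct for a.e.\ $M$. However, there are genuine gaps in both step~(ii) and step~(iii).

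In step~(ii), the central inequality $I(f_1,\ldots,f_N)\geq I(Tf_1,\ldots,Tf_N)$ for a symmetrization $T$ is not established. You assert a reduction to ``a one-dimensional rearrangement inequality for products of tail integrals'' of the profiles $h_i(t)=\int_{\mathrm{dist}_{H_0}(\cdot)=t}f_i$, obtained by ``fibering'' the space of totally geodesic hypersurfaces relative to $H_0$. That fibering does not cleanly exist: for a hyperplane $H$ not perpendicular to $H_0$, the mass $G_i(H^+)=\int_{H^+}f_i$ is not a tail integral of $h_i$, and under Steiner symmetrization it can move in either direction depending on the angle of $H$ relative to $H_0$ and the sign of its signed distance to $e$. ``A concrete check in the Euclidean planar case for a triangle'' is not a proof of the required pointwise inequality, and you do not state what inequality you would actually need. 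Moreover, Steiner symmetrization is poorly developed on $\SP^n$ and $\HY^n$; the paper deliberately uses two-point symmetrization, where the symmetrization inequality is proved by a \emph{combinatorial} pointwise inequality: pairing $M$ with $\rho M$, decomposing each $K_i$ into $K_i^{\mathrm{sym}}, K_i^{\mathrm{fix}}, K_i^{\mathrm{mov}}$, and checking that the sum $\sum_{i=1}^4\chi(\CONV\{D_i\}\cap M)\geq\sum_{i=1}^4\chi(\CONV\{E_i\}\cap M)$ holds for the four reflected configurations $D_i, E_i$ by case analysis on split pairs. Nothing of this flavor is present in your argument, and the product-form identity from step~(i) does not by itself yield it: the quantities $G_i^T(H^\pm)$ and $G_i^T((\rho H)^\pm)$ satisfy only linear constraints with the un-symmetrized quantities, which is not enough to conclude a pointwise product inequality. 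The passage from ``$T$ does not increase $I$'' to the symmetric decreasing rearrangement is also nontrivial (the Baernstein--Taylor compactness argument), though that part is citable.

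In step~(iii), the radial monotonicity of $W_1$ is asserted via the heuristic that ``moving $x$ away from $p$ enlarges $\CONV\{x,x_2,\ldots,x_N\}$ in an integrated sense.'' This is false pointwise: if $p\notin X:=\CONV\{x_2,\ldots,x_N\}$, moving $x$ away from $p$ toward $X$ can \emph{shrink} the convex hull's intersection with a given $M$. The correct monotonicity statement requires simultaneously pairing the hypersurface $M$ with its reflection $M^e$ about $p$ and the configuration $X$ with its reflection $X^e$, and then verifying case by case that the resulting sum $\alpha(t)=\alpha_1(t)+\cdots+\alpha_4(t)$ is nondecreasing in $t=d_{\M^n}(p,x)$; the case $p\notin X$ with $M$ meeting exactly one of $X$, $X^e$ is the delicate one and is where the proper-support hypothesis for $\SP^n$ enters. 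As written, your step~(iii) neither identifies these pairings nor the case split, so the bathtub step rests on an unproved claim.
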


By taking indicator functions on compact sets, we obtain the following
corollary.

\begin{corollary}
  \label{1:main-cor}
	Let $N\in\N$, $K\subseteq\M^n$ be compact, and proper in the case 
	$\M^n =\SP^n$. Then
	\[
		\E U_1([K]_N) \geq \E U_1([B_K]_N),
	\]
	where $B_K$ is a geodesic ball satisfying $\lambda_n(K) =
        \lambda_n(B_K)$.
\end{corollary}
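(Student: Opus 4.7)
The plan is to specialize Theorem~\ref{1:main-thm} to the case $f_1 = \cdots = f_N = \IND_K$. First I would verify the hypotheses: $\IND_K$ is bounded with $\|\IND_K\|_\infty = 1$; it is integrable with $\|\IND_K\|_{L^1(\M^n)} = \lambda_n(K) < \infty$ since $K$ is compact; and its support $K$ is proper whenever $\M^n = \SP^n$, which is exactly the hypothesis imposed on $K$ in the corollary. Thus Theorem~\ref{1:main-thm} applies.

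Next I would read off the balls produced by the theorem. It supplies geodesic balls $B_1, \ldots, B_N$ sharing a common center with
\[
  \lambda_n(B_i) \;=\; \frac{\|\IND_K\|_{L^1(\M^n)}}{\|\IND_K\|_\infty} \;=\; \lambda_n(K)
\]
for each $i$. Since a geodesic ball in $\M^n$ is determined by its center together with its volume, all the $B_i$ must coincide with a single ball, which I identify with the ball $B_K$ appearing in the corollary.

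Finally, I would rewrite the conclusion~\eqref{eq:main-thm-U1} in probabilistic terms. Unwinding the definition of $I$,
\[
  I(\IND_K, \ldots, \IND_K) \;=\; \int_K \cdots \int_K U_1(\CONV\{x_1,\ldots,x_N\})\, dx_1 \cdots dx_N \;=\; \lambda_n(K)^N\, \E U_1([K]_N),
\]
because $[K]_N$ is by definition the convex hull of $N$ independent samples from the uniform distribution on $K$. The same computation for $B_K$ yields $I(\IND_{B_K}, \ldots, \IND_{B_K}) = \lambda_n(B_K)^N\, \E U_1([B_K]_N)$, and since $\lambda_n(K) = \lambda_n(B_K)$ the common normalizing factor cancels from both sides of~\eqref{eq:main-thm-U1} to deliver the stated inequality. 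There is no real obstacle here: the argument is a direct specialization of Theorem~\ref{1:main-thm}, and the only minor checks are the proper-support condition on the sphere and the uniqueness of the ball $B_K$.
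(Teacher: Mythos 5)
Your proof is correct and matches the paper's intended argument exactly: the corollary is obtained by specializing Theorem~\ref{1:main-thm} to $f_1 = \cdots = f_N = \IND_K$, noting that all the $B_i$ collapse to a single ball $B_K$ of the same volume as $K$, and dividing both sides of \eqref{eq:main-thm-U1} by the common normalization $\lambda_n(K)^N = \lambda_n(B_K)^N$.
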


The proof of Theorem \ref{1:main-thm} does not rely on
\eqref{eqn:GHS}. Rather, we first prove a rearrangement inequality for
$I(f_1,\ldots,f_N)$ that reduces the problem to radially decreasing
densities. This is similar to the route taken in the Euclidean setting
\cite{PP:2012}, \cite{PP:2017b} but we use two-point symmetrization,
e.g., \cite{Wolontis:1952}, \cite{Benyamini:1984}, \cite{BroSol:2000},
as in \cite{GHS:2003}, rather than Steiner symmetrization. In
${\mathbb R}^n$, a key tool behind stochastic isoperimetric
inequalities is Steiner symmetrization and associated rearrangement
inequalities \cite{Rogers:1957}, \cite{BLL:1974}, \cite{Christ:1984}.
The latter interface well with fundamental tools in convex geometry,
like shadow systems, Alexadrov's theory of mixed volumes, Busemann's
convexity of intersection bodies (see \cite{PP:2017b} and the
references therein).  There is no comparable development on the sphere
or in hyperbolic space.  Going back to Baernstein and Taylor 
\cite{BaerTay:1976}, two-point symmetrization has been used as an
analytical tool on $\mathbb{M}^n$ for multiple integral
rearrangement inequalities, see also \cite{BurSch:2001},
\cite{Morp:2002}, \cite{BurHaj:2006}; more recently in isoperimetric inequalities
\cite{Bezdek:2018}. However, such techniques have not yet been fused
with stochastic convex geometry in $\M^n$.  Theorem \ref{1:main-thm}
is a first step in this direction.

As noted in \cite{GHS:2003}, there is a special relationship between
$U_1$ and spherical polar duality (see Proposition \ref{U1polar}). In
this way, \eqref{eq:main-thm-U1} can also be reinterpreted as a
spherical Blaschke-Santal\'{o} inequality in stochastic form.

\begin{theorem}
  \label{1:main-thmPolar}
	Let $N\in\N$, $f_1,\ldots, f_N\colon\SP^n\to\R^+$ bounded,
	integrable, and with proper support. Set
  \[
  \tilde{I}(f_1,\ldots, f_N) = \int_{\SP^n}\ldots\int_{\SP^n}
  \lambda_n(\CONV\{x_1,\ldots, x_N\}^*) \prod_{i=1}^N f_i(x_i)\,
  dx_1\ldots dx_N.
  \]
  Then
  \[
  \tilde{I}(f_1\ldots, f_N) \leq \tilde{I}(\|f_1\|_\infty C_1, \ldots,
  \|f_N\|_\infty C_N),
  \]
  where the $C_i$ are spherical caps centered at a common
  point, satisfying \[\lambda_n(C_i) =
  \frac{\|f_i\|_{L^1(\SP^n)}}{\|f_i\|_\infty}. \]
\end{theorem}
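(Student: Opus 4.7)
The plan is to derive Theorem \ref{1:main-thmPolar} directly from Theorem \ref{1:main-thm} by invoking the polar-duality relation for $U_1$ alluded to just before its statement. Specifically, I expect Proposition \ref{U1polar} to assert that for every proper spherical convex body $K\subseteq\SP^n$,
\[
  U_1(K) \;=\; \alpha_n \;-\; \beta_n\,\lambda_n(K^*),
\]
for constants $\alpha_n,\beta_n>0$ depending only on $n$. Geometrically this reflects the fact that a great subsphere with unit normal $u$ misses $K$ exactly when $\pm u\in K^*$, so the measure of great subspheres avoiding $K$ is a positive multiple of $\lambda_n(K^*)$.

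Granted this identity, the proof becomes essentially a one-line computation. Since each $f_i$ has proper support, $\CONV\{x_1,\ldots,x_N\}$ is almost surely a proper spherical convex body, and inserting the identity pointwise into the definition of $I$ and applying Fubini yields
\[
  I(f_1,\ldots,f_N) \;=\; \alpha_n\prod_{i=1}^N\|f_i\|_{L^1(\SP^n)} \;-\; \beta_n\,\tilde{I}(f_1,\ldots,f_N).
\]
Applying the same substitution on the right-hand side of \eqref{eq:main-thm-U1}, and using that $\bigl\|\,\|f_i\|_\infty\IND_{B_i}\bigr\|_{L^1(\SP^n)}=\|f_i\|_\infty\lambda_n(B_i)=\|f_i\|_{L^1(\SP^n)}$ by the defining condition on $B_i$, the two $\alpha_n$-terms coincide. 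Thus Theorem \ref{1:main-thm} reduces to
\[
  \beta_n\,\tilde{I}(f_1,\ldots,f_N) \;\leq\; \beta_n\,\tilde{I}(\|f_1\|_\infty B_1,\ldots,\|f_N\|_\infty B_N),
\]
and dividing by $\beta_n>0$ yields the claim, once one observes that the geodesic balls in $\SP^n$ are precisely the spherical caps, so that $B_i=C_i$.

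The main obstacle is not analytic but only one of verifying that the polar identity is applicable in this integrated setting: one must ensure that $\CONV\{x_1,\ldots,x_N\}$ is almost surely a proper spherical convex body, and that degenerate configurations (e.g., collinear points on a great subsphere, or points on the boundary of a supporting hemisphere) form a null set under $\prod_{i=1}^N f_i(x_i)\,dx_i$. Both properties follow from the standing assumption that the $f_i$ are integrable with proper support on $\SP^n$, so they do not interfere with the integral identity above and the reduction to Theorem \ref{1:main-thm} is complete.
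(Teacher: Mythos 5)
Your proposal is correct and takes essentially the same route as the paper: Theorem \ref{1:main-thmPolar} is obtained by substituting the affine relation between $U_1$ and $\lambda_n(\cdot^*)$ from Proposition \ref{U1polar} into \eqref{eq:main-thm-U1}, and your algebra, including the verification that the constant terms on both sides agree because $\|f_i\|_\infty\lambda_n(B_i)=\|f_i\|_{L^1}$, matches what is implicit in the paper's one-sentence reduction. One small remark: your closing worry about $\CONV\{x_1,\ldots,x_N\}$ being almost surely a \emph{proper} convex body is not actually needed, and indeed need not hold if the supports of the various $f_i$ lie in different open hemispheres; Proposition \ref{U1polar} as stated in the paper covers non-proper convex bodies too (both sides degenerate to the trivially true identity $1+0=1$), so the pointwise substitution and Fubini are valid without any null-set argument.
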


Again, by taking indicator functions on compact sets, we obtain as a corollary:
\begin{corollary}\label{1:cor:randBS}
  Let $N\in\N$, $K\subseteq\SP^n$ be compact, and proper. Then
  \[
  \E \lambda_n([K]_N^*) \leq \E \lambda_n([C_K]_N^*),
  \]
  where $C_K$ is a spherical cap satisfying $\lambda_n(K) = \lambda_n(C_K)$.
\end{corollary}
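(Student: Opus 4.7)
The plan is to derive Corollary \ref{1:cor:randBS} as a direct specialization of Theorem \ref{1:main-thmPolar}, by choosing the densities $f_i$ to be indicator functions of $K$. First I would set $f_1 = \cdots = f_N = \IND_K$. Since $K \subseteq \SP^n$ is compact and proper, each $f_i$ is bounded, integrable, and has proper support, so the hypotheses of Theorem \ref{1:main-thmPolar} are satisfied. Moreover $\|f_i\|_\infty = 1$ and $\|f_i\|_{L^1(\SP^n)} = \lambda_n(K)$, so the associated spherical caps $C_i$ guaranteed by the theorem all have volume $\lambda_n(K) = \lambda_n(C_K)$. Sharing a common center, they can thus be taken equal to $C_K$, and the functions $\|f_i\|_\infty C_i$ appearing on the right-hand side of the theorem become simply $\IND_{C_K}$.

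Next I would recast both sides of the resulting inequality as expectations under the uniform distribution on $K$, respectively on $C_K$. Since $X_1, \ldots, X_N$ are i.i.d.\ uniform on $K$, their joint density is $\lambda_n(K)^{-N} \prod_{i=1}^N \IND_K(x_i)$, and hence
\[
\tilde{I}(\IND_K, \ldots, \IND_K) = \int_K \!\cdots\! \int_K \lambda_n\bigl(\CONV\{x_1,\ldots,x_N\}^*\bigr)\, dx_1 \cdots dx_N = \lambda_n(K)^N\, \E\, \lambda_n\bigl([K]_N^*\bigr),
\]
and analogously $\tilde{I}(\IND_{C_K}, \ldots, \IND_{C_K}) = \lambda_n(C_K)^N\, \E\, \lambda_n\bigl([C_K]_N^*\bigr)$. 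Theorem \ref{1:main-thmPolar} then yields
\[
\lambda_n(K)^N\, \E\, \lambda_n\bigl([K]_N^*\bigr) \leq \lambda_n(C_K)^N\, \E\, \lambda_n\bigl([C_K]_N^*\bigr),
\]
and dividing by the common factor $\lambda_n(K)^N = \lambda_n(C_K)^N$ gives the claim.

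There is no real obstacle here beyond verifying that the specialization is legitimate; in particular I would note that measurability of $(x_1,\ldots,x_N) \mapsto \lambda_n(\CONV\{x_1,\ldots,x_N\}^*)$ and the convention for defining the spherical polar when the convex hull fails to be a proper spherical convex body (which occurs only on a set of $\prod f_i\,dx_i$-measure zero when $N \geq n+1$) are already implicit in the statement of Theorem \ref{1:main-thmPolar} and therefore inherited by the corollary.
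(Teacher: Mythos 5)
Your proof is correct and matches the paper's approach exactly: the paper derives the corollary from Theorem \ref{1:main-thmPolar} precisely "by taking indicator functions on compact sets," which is the specialization $f_i = \IND_K$ you carry out, followed by normalizing by $\lambda_n(K)^N = \lambda_n(C_K)^N$ to pass to expectations.
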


A symmetric version of Corollary \ref{1:cor:randBS}, where $[K]_N$ is
replaced by the convex hull of the random points $X_i$ and their
reflections about some fixed origin, can be obtained by using a stochastic
Blaschke--Santal{\'o} inequality in Euclidean space from \cite{Cordero:etal:2015},
together with gnomonic projection. This strategy of relying on Euclidean
techniques such as Steiner symmetrization in $\mathbb{R}^n$ after
projecting from the sphere has also been studied in detail for
spherical centroid bodies in \cite{BHPS:2019}.

By using two-point symmetrization, we treat Euclidean, spherical and
hyperbolic space simultaneously.  Note that in addition to Steiner
symmetrization, which is used to prove \eqref{eqn:stoch_urysohn}, and
Minkowski symmetrization for \eqref{eqn:stoch_urysohn_halfspaces},
this gives another method to obtain a randomized Urysohn inequality in
$\R^n$.

\section{Background material}\label{sec:background}

In this section, we introduce our models for the spaces of constant
curvature, along with certain isometry-invariant measures associated
with them. Moreover, we give a short account of two-point
symmetrization and rearrangements in these spaces and collect
properties that we need later on.  As a general reference for
spherical convexity in particular, we refer the reader to
\cite{Glas:1996} or \cite{SW:08}.  Further information about
two-point symmetrization can also be found in \cite{AubFra:2004},
\cite{Baer:2019}, \cite{BaerTay:1976}, \cite{BroSol:2000}, and \cite{BurFor:2013}.
 
\subsection{Spaces of constant curvature}

We will use the following models $\M^n$ for spherical, Euclidean, and hyperbolic
geometry: Let $e := (1, 0, \ldots, 0)^T\in\R^{n+1}$ and write 
$x\cdot y := x_0y_0 + x_1y_1 + \ldots x_ny_n$ and 
$\langle x, y\rangle := x_0y_0 - x_1y_1 - \ldots x_ny_n$ for the standard
Euclidean and Minkowski scalar products. Then we define
\begin{itemize}
	\item Spherical space: 
          $\SP^n := \{x\in\R^{n+1}\colon x \cdot x = 1\}$,
	\item Euclidean space: 
	       $\R^n := \{x\in\R^{n+1} \colon x\cdot e = 1\}$,
	\item Hyperbolic space:
	       $\HY^n := \{x\in\R^{n+1} \colon x\cdot e > 0,
	               \langle x, x \rangle = 1\}$
\end{itemize}
We think of $\R^n$ as an affine subspace, but we will still refer to $\SP^{n-1}$
as $\SP^n \cap \{x_{n+1} = 0\}$. 
Topological interior and closure will always be understood relative to $\M^n$ 
and written as $\INT$ and $\CLOS$, respectively. The isometry-invariant volume 
measure will be denoted by $\lambda_n$ simultaneously for all three geometries.
In each case it is the restriction of $n$-dimensional Hausdorff measure on 
$\R^{n+1}$. We will just write $dx$ instead of $d\lambda_n(x)$ when integrating 
over $\M^n$.

Similarly, we will denote by $\mu_{n-1}^n$ the motion-invariant measure on
$\mathcal{M}_{n-1}^n$, the collection of $(n-1)$-dimensional totally geodesic
submanifolds of $\M^n$, that is,
\[
  \mathcal{M}_{n-1}^n = \{ E \cap \M^n \ST E \text{ is an } 
                        n\text{-dimensional subspace of } \R^{n+1}\}.
\]
The normalization is $\mu_{n-1}^n(\mathcal{M}_{n-1}^n) = 1$, if $\M^n = \SP^n$, 
and such that 
$\mu_{n-1}^n(\{M\in\mathcal{M}_{n-1}^n\ST M\cap B_1\neq \emptyset\}) %
= 1$, if $\M^n = \R^n$ or $\HY^n$, where $B_1$ is the 
geodesic ball of radius $1$ around $e$.
For every $M\in\mathcal{M}_{n-1}^n$, we can find a vector $u\in\R^{n+1}$, such
that $M = u^\bot\cap\M^n$, where the orthogonal complement is taken either with
respect to Euclidean or Minkowski scalar product in $\R^{n+1}$.
Again, we write $dM$ instead of $d\mu_{n-1}^n(M)$ when integrating over
$\mathcal{M}_{n-1}^n$. 

Let $K\subseteq\M^n$. If for any $x, y\in K$, $x\neq -y$ in the case 
$\M^n = \SP^n$, the shortest geodesic segment $[x, y]$ connecting $x$, $y$
lies inside $K$, we call $K$ \emph{convex}. Moreover, $K$ is a 
\emph{convex body} if it is convex and compact. The \emph{convex hull} of a set 
$K$, denoted by $\CONV K$, is the intersection of all convex sets containing 
$K$. A closed set $K\subseteq\SP^n$ is called \emph{proper}, if it is
contained in an open hemisphere. Accordingly, $f\colon\SP^n\to\R$ is said
to have \emph{proper support}, if $\SPT f = \CLOS\{x\in\SP^n\ST f(x) \neq 0\}$
is proper.

For a set $A\subseteq\M^n$, we define
\begin{align}\label{2:def-chi}
  \chi(A) := \begin{cases}
	  1,	& \text{if } A\neq\emptyset, \\
	  0,	& \text{if } A = \emptyset.
  \end{cases}
\end{align}
Note that on convex bodies in $\R^n$ and $\HY^n$, and on \emph{proper} 
convex bodies in $\SP^n$, $\chi$ equals the Euler characteristic. In some 
parts we will also allow densities having non-proper support in $\SP^n$, 
hence, it seems more appropriate to use the above version of $\chi$. 
We can now give a definition of $U_1$ on $\M^n$ analogous to 
\eqref{def-U1-sphere}:
\begin{align*}
	U_1(K) := \int_{\mathcal{M}^n_{n-1}} \chi(K\cap M)\, dM,
\end{align*}
where $K\subseteq\M^n$ is compact and $\chi$ as in \eqref{2:def-chi}.

\medskip
\noindent{\it Polar coordinates.} We set
\[
	R^\M := \begin{cases}
		\pi,  		& \text{if } \M^n = \SP^n, \\
		\infty, 	& \text{if } \M^n = \R^n \text{ or } \M^n = \HY^n.
	\end{cases}
\]
For $t\in \left[0, R^\M\right]$, we define
\[
  \CS t := \begin{cases}
  	\cos t,  	& \text{if } \M^n = \SP^n, \\
  	1, 				& \text{if } \M^n = \R^n,  \\
  	\cosh t,	& \text{if } \M^n = \HY^n.
  \end{cases}
  \qquad
  \SN t := \begin{cases}
  	\sin t,  	& \text{if } \M^n = \SP^n, \\
  	t, 				& \text{if } \M^n = \R^n,  \\
  	\sinh t,	& \text{if } \M^n = \HY^n,
  \end{cases}
\]
and introduce polar coordinates $x(t, u) := e\CS t + u\SN t$, for 
$u\in\SP^{n-1}$ and $t\in\left[0, R^\M\right]$. The following transformation
formula holds in all three geometries:
\begin{align}\label{eq:polar}
  \int_{\M^n} f(x) dx = \int_{\SP^{n-1}} \int_0^{R^\M}
                        f(x(t, u))\, \SN^{n-1}t\, dt du,
\end{align}
for any integrable function $f\colon\M^n\to\R$. For a proof see 
\cite[Sections 3.F, 3.H]{Gallot:etal:2004}.

\medskip
\noindent{\it Metric.} We write $d_{\M^n}(x, y)$ for the geodesic distance
between $x, y\in\M^n$, that is,
\[
d_{\M^n}(x, y) = \begin{cases}
\arccos(x\cdot y),  	  & \text{if } \M^n = \SP^n, \\
\|x-y\|,  			  & \text{if } \M^n = \R^n,  \\
\ARCOSH\left(\langle x, y\rangle\right),
& \text{if } \M^n = \HY^n.
\end{cases}
\]
In polar coordinates, we have $d_{\M^n}(e, x(t, u)) = t$, for 
$u\in\SP^{n-1}$ and $t\in\left[0, R^\M\right]$.

\medskip
\noindent{\it Polarity.} The following relation is special to the case $\M^n = \SP^n$: For 
$K\subseteq\SP^n$, its \emph{polar set} $K^*$ is given by
\[
  K^* = \{x\in\SP^n\ST x\cdot y \leq 0\textrm{ for all } y\in K \}.
\] 
\begin{proposition}\label{U1polar}
Let $K\subseteq\SP^n$ be a convex body. Then
\[
	\frac{U_1(K)}{\mu^n_{n-1}(\mathcal{M}^n_{n-1})}  
	+ \frac{2\lambda_n(K^*)}{\lambda_n(\SP^n)} = 1.
\]
\end{proposition}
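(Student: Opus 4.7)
The plan is to parametrize great subspheres by their unit normals and to translate the condition ``$M$ misses $K$'' into a statement about $K^*$.

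Concretely, the map $\phi\colon\SP^n\to\mathcal{M}^n_{n-1}$ sending $u\mapsto u^\perp\cap\SP^n$ is a $2$-to-$1$ surjection, and by the isometry-invariant characterization of $\mu^n_{n-1}$ on the sphere (using $\mu^n_{n-1}(\mathcal{M}^n_{n-1})=1$), one has
\[
  \mu^n_{n-1}(A) \;=\; \frac{\lambda_n(\phi^{-1}(A))}{\lambda_n(\SP^n)}
\]
for every Borel set $A\subseteq\mathcal{M}^n_{n-1}$. Applying this to the set of hyperplanes meeting $K$, I get
\[
  U_1(K) \;=\; \frac{\lambda_n\bigl(\{u\in\SP^n\ST u^\perp\cap K\neq\emptyset\}\bigr)}{\lambda_n(\SP^n)},
\]
so it suffices to show the complementary set $\{u\in\SP^n : u^\perp\cap K=\emptyset\}$ has $\lambda_n$-measure exactly $2\lambda_n(K^*)$.

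For this I would argue that since $K$ is a convex body, $u^\perp\cap K=\emptyset$ if and only if $u\cdot y$ has constant strict sign across $K$, i.e.\ either $u\cdot y<0$ for every $y\in K$ or $u\cdot y>0$ for every $y\in K$. The first condition describes the relative interior of $K^*$ (in $\SP^n$), and the second describes the relative interior of $-K^*=\{-u : u\in K^*\}$. These two open sets are disjoint (they lie in opposite open hemispheres determined by any fixed point of $K$), and each has $\lambda_n$-measure equal to $\lambda_n(K^*)$, since the boundary of a spherically convex body has Hausdorff $n$-measure zero and $u\mapsto -u$ is an isometry. Adding gives the required $2\lambda_n(K^*)$.

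Combining,
\[
  U_1(K) \;=\; \frac{\lambda_n(\SP^n)-2\lambda_n(K^*)}{\lambda_n(\SP^n)}
         \;=\; 1-\frac{2\lambda_n(K^*)}{\lambda_n(\SP^n)},
\]
which rearranges to the stated identity (using $\mu^n_{n-1}(\mathcal{M}^n_{n-1})=1$).

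The main obstacle is really just bookkeeping: verifying that the invariant measure $\mu^n_{n-1}$ on $\mathcal{M}^n_{n-1}$ pulls back to the normalized uniform measure on $\SP^n$ under the two-to-one map $u\mapsto u^\perp\cap\SP^n$, and checking that the identification $\{u : u\cdot y<0\ \forall y\in K\}=\INT K^*$ together with $\lambda_n(\partial K^*)=0$ rigorously lets one replace the closed polar with its interior. Everything else is an elementary geometric observation about when a great subsphere separates a convex body into two nonempty pieces versus leaves it in an open hemisphere.
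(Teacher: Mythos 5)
Your proof is correct and, unlike the paper's, is self-contained: the paper simply cites \cite[eq.~20]{GHS:2003} for the proper case and then disposes of the case where $K$ contains antipodal points with the one-line observation that there $U_1(K)=\mu^n_{n-1}(\mathcal{M}^n_{n-1})$ while $\INT K^*=\emptyset$. You instead prove the identity from scratch by pushing the normalized spherical measure forward under the $2$-to-$1$ map $u\mapsto u^\perp\cap\SP^n$, invoking uniqueness of the rotation-invariant probability measure on $\mathcal{M}^n_{n-1}$ to identify the pushforward with $\mu^n_{n-1}$, and then matching the complementary event $\{u:u^\perp\cap K=\emptyset\}$ with $\INT K^*\,\dot\cup\,(-\INT K^*)$ up to a null set. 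That computation is exactly the content of the cited formula in Gao--Hug--Schneider, so what you gain is a reference-free proof, at the cost of a bit more bookkeeping.

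Two small things worth flagging, both of which the paper glosses over as well. First, your step ``$u^\perp\cap K=\emptyset$ iff $u\cdot y$ has constant strict sign on $K$'' tacitly uses that $K$ is connected so that points of $K$ on opposite sides of $u^\perp$ can be joined by a geodesic inside $K$ crossing $u^\perp$; this holds for proper $K$ and for any convex body containing a nondegenerate geodesic segment through antipodes, but with the paper's literal definition a convex body may be the two-point set $\{p,-p\}$, for which both the proposition and your equivalence fail. Second, you should verify $\lambda_n(\partial K^*)=0$ also in the degenerate cases (e.g.\ $K$ a point, so $K^*$ a closed hemisphere with boundary a great subsphere, or $K$ containing antipodes, so $K^*$ lower-dimensional); it does hold, but it is worth a sentence rather than the blanket appeal to ``boundary of a convex body is null''. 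With those caveats made explicit your argument is a clean, self-contained alternative to the paper's citation-based proof.
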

\begin{proof}
If $K$ is	proper the proof can be found in \cite[eq. 20]{GHS:2003}.
On the other hand, as soon as $K$ contains antipodal points, we have
$U_1(K) = \mu^n_{n-1}(\mathcal{M}^n_{n-1})$ and $\INT K^* = \emptyset$.
\end{proof}

\subsection{Two-point symmetrization and rearrangements}

A hyperplane $H\in\mathcal{M}_{n-1}^n$ divides $\M^n$ into two connected
components, which we will call the closed halfspaces $H^+$ and $H^-$ in 
such a way that always $e\in H^+$. The associated orthogonal reflection
about $H$ will be denoted by $\rho\colon \M^n\to \M^n$. 
If $H = u^\bot\cap\M^n$, for $u\in\R^{n+1}$, then $\rho$ is given by
\[
\rho x := \rho(x) = \begin{cases}
	x - 2\frac{x\cdot u}{u\cdot u}u,
										  	  & \text{if } \M^n = \SP^n, \\
	x - 2\frac{x\cdot u}{1-(u\cdot e)^2}[u - (u\cdot e)e],
								  			  & \text{if } \M^n = \R^n,  \\
	x - 2\frac{\langle x, u\rangle}{\langle u, u\rangle}u,
													& \text{if } \M^n = \HY^n.
\end{cases}
\]
In view of the following, we make a note of the fact that for any $x,y\in H^+$ 
we have
\[
  d_{\M^n}(x,y) \leq d_{\M^n}(x,\rho(y)).
\]
Indeed, let $z$ be the intersection of the geodesic segment $[x, \rho y]$ with $H$.
Then 
\[
	d_{\M^n}(x,y) \leq d_{\M^n}(x,z) + d_{\M^n}(z,y) 
	=  d_{\M^n}(x,z) +  d_{\M^n}(z,\rho y) 
	=  d_{\M^n}(x,\rho y),
\] 
by the triangle inequality.

\medskip
\noindent{\it Two-point symmetrization.} If we decompose a set $K\subseteq\M^n$ as
\[
  K = \underbrace{(K \cap \rho K)}_{K^\textrm{sym}} \dot{\cup} 
  			\underbrace{\left[(K \cap H^+)
  				\setminus K^\textrm{sym}\right]}_{K^\textrm{fix}} \dot{\cup} 
  			\underbrace{\left[(K \cap H^-)
  				\setminus K^\textrm{sym}\right]}_{K^\textrm{mov}},
\]
the \emph{two-point symmetrization} $T = (H, \rho, T)$ with respect to $H$ is
given by
\[
  TK = \underbrace{(K \cap \rho K)}_{K^\textrm{sym}} \dot{\cup} 
  			\underbrace{\left[(K \cap H^+)
  				\setminus K^\textrm{sym}\right]}_{K^\textrm{fix}} \dot{\cup} 
  			\underbrace{\rho\left[(K \cap H^-)
  				\setminus K^\textrm{sym}\right]}_{\rho K^\textrm{mov}}.
\]
\begin{figure}
	\begin{center}
	\resizebox{90mm}{!} {
		\begin{tikzpicture}[scale=1.6, xscale=1.2]
			% define colors
			\definecolor{darkgreen}{RGB}{50,110,0}
			\definecolor{darkblue}{RGB}{0,0,180}

			% grid
			%\draw[help lines] (-2, -2) grid (2, 2);

			% reflection hyperplane
			\draw[dashed] (-2, 0) -- (2, 0) node[right]{$H$};	\node[above] at (2, 0.8) {$H^+$};
			\node[below] at (2, -0.8) {$H^-$};

			% pole
			\node at (-1.6, 1.1) [circle, fill, color=gray, inner sep=1pt]{};
			\node at (-1.6, 1.1) [above left, color=gray]{$e$};

			% define convex body
			\newcommand\Body{
				plot [smooth cycle, tension=1, rotate=40, scale=1.5]
				coordinates {(1, 0) (0.2, 0.5) (-1, 0) (0.2, -0.5)}
			}

			% draw convex body
			\draw[dotted] \Body;

			% draw mirror image of convex body
			\draw[yscale=-1, dotted] \Body;

			% labels
			\node[below, color=gray] at (-0.95, -1.1) {$K$};
			\node[above, color=gray] at (-0.9, 1.05) {$\rho K$};

			\node[color=black] at (0.05, 0.2) {$K^\textrm{sym}$};
			\node[color=black] at (0.8, 0.8) {$K^\textrm{fix}$};
			\node[color=gray] at (-0.85, -0.7) {$K^\textrm{mov}$};

			% fill convex body
			%\fill[color=blue, opacity=0.1] \Body;

			% fill K_0
			\begin{scope}	
				\clip \Body;
				\fill[yscale=-1, color=blue, opacity=0.1] \Body;
			\end{scope}

			% fill K_1
			\begin{scope}center
				\clip (-2, 0) rectangle (2, 2);
				\clip \Body;
				\fill[even odd rule, color=blue, opacity=0.1] 
				{\Body} {[yscale=-1]\Body};
			\end{scope}

			% fill \rho(K_2)
			\begin{scope}
				\clip (-2, 0) rectangle (2, 2);
				\clip {[yscale=-1]\Body};
				\fill[even odd rule, color=blue, opacity=0.1] 
				{\Body} {[yscale=-1]\Body};
			\end{scope}
			\node[color=black] at (-0.8, 0.7) {$\rho K^\textrm{mov}$};
			\node[color=darkblue] at (-0.0, -0.45) {$TK$};
		\end{tikzpicture}
	}
	\end{center}
	\caption{A two-point symmetrization of $K$.}
\end{figure}
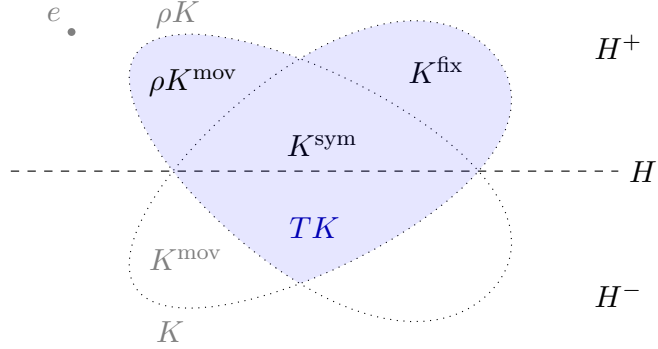
Note that all unions are disjoint up to sets of measure zero, which
immediately shows that $\lambda_n(TK) = \lambda_n(K)$ for all
measurable sets $K\subseteq\M^n$. Intuitively, $T$ pushes as much mass
as possible towards $e$ (that is, into $H^+$) without double-covering
points.

Two-point symmetrization of a function $f\colon\M^n\to\R^+$ is given by
\[
  Tf(x) = \begin{cases}
    \max\{f(x), f(\rho x)\}, & \text{if } x\in H^+, \\
    \min\{f(x), f(\rho x)\}, & \text{if } x\in H^-.
  \end{cases}
\]
We have $T\IND_K = \IND_{TK}$ for any set $K\subseteq\M^n$. More generally, one can 
check that
\begin{equation}\label{2:eqTPlevel}
  \{Tf>s\} = T\{f>s\}
\end{equation}
%To see this, write
%\begin{align*}
%  \{Tf>s\} &=  \underbrace{\{x\in H^+\colon \max\{f(x), f(\rho x)\}>s\}}_{D_1}
%               \cup
%  			   \underbrace{\{x\in H^-\colon \min\{f(x), f(\rho x)\}>s\}}_{D_2},
%  			   \\
%  T\{f>s\} &=  \underbrace{\{x\colon f(x)>s \land f(\rho x)>s\}}_{E_1}
%               \cup
%               \underbrace{\{x\in H^+\colon f(x)>s\}}_{E_2} \cup
%               \underbrace{\{x\in H^+\colon f(\rho x)>s\}}_{E_3},
%\end{align*}
%and note that $D_1\subseteq E_2 \cup E_3$ and $D_2 \subseteq E_1$, and on the
%other hand $E_1 \subseteq D_1 \cup D_2$ and $E_2 \cup E_3 \subseteq D_1$.
for all $s>0$.
For a continuous function $f\in C(\M^n)$ and $\delta>0$, denote by
\[
  \omega(\delta, f) = \sup\{|f(x)-f(y)|\colon d_{\M^n}(x,y)<\delta,\, x,y\in\M^n\}
\]
the \emph{modulus of continuity} of $f$. The proof of the next lemma can be found, e.g.,
in \cite[Proposition 1.37]{Baer:2019} for $\M^n = \R^n$ and translated verbatim to $\SP^n$
and $\HY^n$ (cf. \cite[Section 7.1]{Baer:2019}).

\begin{lemma}\label{2:lemModCont}
For every continuous function $f\in C(\M^n)$, $\delta>0$, and every two-point
symmetrization $T$, we have
\[
  \omega(\delta, Tf) \leq \omega(\delta, f).
\]
\end{lemma}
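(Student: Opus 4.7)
The plan is to prove the inequality pointwise: fix $x,y\in\M^n$ with $d_{\M^n}(x,y)<\delta$ and show $|Tf(x)-Tf(y)|\leq\omega(\delta,f)$. Then split into cases according to which of the closed halfspaces $H^+,H^-$ contains each of $x$ and $y$. The reflection $\rho$ is an isometry, so it preserves all distances it acts on; the only genuinely non-trivial ingredient will be the observation recorded just before the definition of two-point symmetrization, namely that reflecting a point into the opposite halfspace only increases distances.

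The two pure cases, where both points lie in $H^+$ or both in $H^-$, are handled by the elementary pointwise inequality $|\max\{a_1,a_2\}-\max\{b_1,b_2\}|\leq\max\{|a_1-b_1|,|a_2-b_2|\}$ and its analog for $\min$, applied with $(a_1,a_2)=(f(x),f(\rho x))$ and $(b_1,b_2)=(f(y),f(\rho y))$. Since $d_{\M^n}(\rho x,\rho y)=d_{\M^n}(x,y)<\delta$, both of the quantities on the right are bounded by $\omega(\delta,f)$, which settles these cases.

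The main work is the mixed case, say $x\in H^+$ and $y\in H^-$. Applying the distance inequality from the excerpt to the pair $x,\rho y\in H^+$ yields
\[
  d_{\M^n}(x,\rho y)\leq d_{\M^n}(x,\rho(\rho y))=d_{\M^n}(x,y)<\delta,
\]
and symmetrically $d_{\M^n}(\rho x,y)<\delta$; together with $d_{\M^n}(\rho x,\rho y)=d_{\M^n}(x,y)<\delta$, this shows that each of the four values $f(x),f(\rho x),f(y),f(\rho y)$ lies within $\omega(\delta,f)$ of each of the other three. Writing $a=f(x)$, $b=f(\rho x)$, $c=f(y)$, $d=f(\rho y)$ so that $Tf(x)=\max\{a,b\}$ and $Tf(y)=\min\{c,d\}$, a short case split (whether $\max\{a,b\}\geq\min\{c,d\}$ or not, and which entries realize the extrema) reduces $|Tf(x)-Tf(y)|$ to one of $|a-c|,|a-d|,|b-c|,|b-d|$, each already controlled by $\omega(\delta,f)$.

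The main obstacle is purely bookkeeping in this mixed case; the essential geometric fact driving it---that $\rho$ pulls points in opposite halfspaces closer---has been established in the excerpt and is precisely the reason why two-point symmetrization, unlike more general rearrangements, respects continuity moduli.
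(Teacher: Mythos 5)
Your proof is correct, and it fills in the argument that the paper itself does not reproduce (the paper simply cites Baernstein, \emph{Symmetrization in Analysis}, Proposition~1.37). The three-case pointwise split is the standard argument: the pure cases $x,y\in H^\pm$ reduce to the elementary max/min Lipschitz inequality applied componentwise, using that $\rho$ is an isometry; the mixed case hinges exactly on the distance inequality stated just before the definition of two-point symmetrization, applied once to $x,\rho y\in H^+$ (giving $d_{\M^n}(x,\rho y)\leq d_{\M^n}(x,y)$) and once to $\rho x, y\in H^-$, so that all four cross-differences among $f(x),f(\rho x),f(y),f(\rho y)$ are controlled by $\omega(\delta,f)$, from which $|\max\{a,b\}-\min\{c,d\}|\leq\omega(\delta,f)$ follows by a routine check. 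This matches the proof one finds in the reference and translates verbatim across $\SP^n$, $\R^n$, and $\HY^n$ because the only geometric input is the one recorded in the paper.
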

%\begin{proof}
%We reproduce the proof from \cite[Lemma 1]{BaerTay:1976}: For numbers $a_1, a_2, b_1, b_2\in\R$
%and $\alpha_1 = \max\{a_1, a_2\}$, $\alpha_2 = \min\{a_1, a_2\}$, $\beta_1 = \max\{b_1, b_2\}$,
%$\beta_2 = \min\{b_1, b_2\}$, we have
%\[
%  \max\{|\alpha_1 - \beta_1|, |\alpha_2 - \beta_2|\} \leq \max\{|a_1 - b_1|, |a_2 - b_2|\}. 
%\]
%Now, let $T = (H, \rho, T)$ and $x, y\in H^+$ or $x, y\in H^-$ with $d_{\M^n}(x,y) \leq\delta$. %Then by the above inequality, and since $d_{\M^n}(\rho x,\rho y) = d_{\M^n}(x,y)$, we get
%\begin{align*}
%  |Tf(x) - Tf(y)| &\leq \max\{						|\max\{f(x),f(\rho x)\} - \max\{f(y),f(\rho y)\}|, \\
%   								&\phantom{\leq \max\{~}	|\min\{f(x),f(\rho x)\} - \min\{f(y),f(\rho y)\}|\} \\
%  								&\leq \max\{|f(x) - f(y)|, |f(\rho x) - f(\rho y)|\} \leq \omega(\delta, f).
%\end{align*}
%If on the other hand, say, $x\in H^+$ and $y\in H^-$, we have
%\begin{align*}
%	|Tf(x) - Tf(y)| \leq \max\{&|f(x) - f(y)|, |f(x) - f(\rho y)|, \\
%														 &|f(\rho x) - f(y)|, |f(\rho x) - f(\rho y)|\} \leq \omega(\delta, f),
%\end{align*}
%since also $d_{\M^n}(x,\rho y) = d_{\M^n}(\rho x, y) \leq d_{\M^n}(x,y) \leq \delta$.
%\end{proof}

\medskip
\noindent{\it Rearrangements.} Let $f\colon\M^n\to\R^+$ be integrable so that 
in particular $\lambda_n(\{f>t\})<\infty$ for all $t>0$. We want to associate to $f$
a function $f^*$ whose sublevel sets are geodesic balls around $e\in\M^n$. 
The \emph{symmetric decreasing rearrangement} of $f$ is the function 
$f^\REARR\colon\M^n\to\R^+$ of $x = x(t, u)$, depending only on $t$, is 
non-increasing as $t$ increases, and has the property 
\[
  \lambda_n(\{f>s\}) = \lambda_n(\{f^\REARR>s\})
\]
for all $s>0$. It is defined up to sets of measure zero and can be written
explicitly as
\[
  f^\REARR(x(t, u)) = \inf\{s\ST \lambda_n(\{f > s\}) \leq 
                			\lambda_n(B_t)\},
\]
where $B_t$ is the geodesic ball around $e$ with radius
$t = d_{\M^n}(e,x)$.

The next lemma relates two-point symmetrization and symmetric decreasing 
rearrangements.

\begin{lemma}\label{2:lemTPrearr}
Let $f\colon\M^n\to\R^+$ be integrable and  $T = (H, \rho, T)$ any 
two-point symmetrization. Then the following holds:
\begin{enumerate}[(i)]
  \item For $x\in H^+$ we have $f(x)f^\REARR(x) + f(\rho x)f^\REARR(\rho x) \leq 
	    Tf(x)f^\REARR(x) + Tf(\rho x)f^\REARR(\rho x)$.
  \item $\int_{\M^n} |f(x) - f^\REARR(x)|^2\, dx \geq 
        \int_{\M^n} |Tf(x) - f^\REARR(x)|^2\, dx$.
\end{enumerate}
\end{lemma}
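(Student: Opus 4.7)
The plan is to prove (i) as a clean two-point rearrangement inequality and then deduce (ii) by expanding the squared $L^2$-norm and integrating (i) over $H^+$. The main conceptual point is that the symmetric decreasing rearrangement $f^\REARR$ is automatically ``oriented'' with respect to $H^+$, because $e\in H^+$ and hence distances to $e$ are smaller on $H^+$ than their reflections on $H^-$.

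For \textbf{(i)}, fix $x\in H^+$ and consider the pair $\{x,\rho x\}$. The key monotonicity fact is that
\[
  d_{\M^n}(e,x)\leq d_{\M^n}(e,\rho x),
\]
which follows directly from the distance inequality established just before the subsection on two-point symmetrization (applied with $y=e\in H^+$). Since $f^\REARR$ depends only on $d_{\M^n}(e,\cdot)$ and is non-increasing in this distance, this gives $f^\REARR(x)\geq f^\REARR(\rho x)$. Now write $a=f(x)$, $b=f(\rho x)$, $A=f^\REARR(x)$, $B=f^\REARR(\rho x)$ with $A\geq B$. By the definition of $Tf$, the pair $(Tf(x),Tf(\rho x))$ equals $(\max(a,b),\min(a,b))$, so $Tf$ is similarly ordered as $f^\REARR$ on this pair. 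The desired inequality then reduces to the elementary two-point rearrangement
\[
  aA+bB \;\leq\; \max(a,b)\,A+\min(a,b)\,B,
\]
which holds with equality if $a\geq b$, and otherwise reduces to $(b-a)(A-B)\geq 0$. No real difficulty here.

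For \textbf{(ii)}, expand
\[
  \int_{\M^n}\bigl(f-f^\REARR\bigr)^2\,dx
  = \int_{\M^n} f^2\,dx - 2\int_{\M^n} f\,f^\REARR\,dx + \int_{\M^n}(f^\REARR)^2\,dx,
\]
and similarly for $Tf$ in place of $f$. The third term is identical in both expansions. For the first term, I would invoke equimeasurability of $f$ and $Tf$: by \eqref{2:eqTPlevel} and the measure-preserving property of $T$ on level sets, $\lambda_n(\{Tf>s\}) = \lambda_n(T\{f>s\}) = \lambda_n(\{f>s\})$ for every $s>0$, so the layer-cake representation gives $\int (Tf)^2\,dx = \int f^2\,dx$. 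Thus the inequality reduces to
\[
  \int_{\M^n} f\,f^\REARR\,dx \;\leq\; \int_{\M^n} Tf\,f^\REARR\,dx.
\]
Splitting the integrals according to the partition $\M^n = H^+\,\dot\cup\,H^-$ (ignoring $H$, which has measure zero) and using that $\rho\colon H^-\to H^+$ is an isometry, hence measure-preserving, each of the two integrals can be rewritten as $\int_{H^+}\bigl[g(x)f^\REARR(x)+g(\rho x)f^\REARR(\rho x)\bigr]\,dx$ with $g=f$ and $g=Tf$ respectively. The pointwise inequality from part (i) then gives the claim upon integration.

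The main obstacle is essentially bookkeeping rather than substance: one needs to verify carefully that the distance inequality at the top of the two-point symmetrization subsection applies with $y=e$ (so that the orientation $f^\REARR(x)\geq f^\REARR(\rho x)$ for $x\in H^+$ is really free), and that equimeasurability of $Tf$ and $f$ follows cleanly from \eqref{2:eqTPlevel} together with the identity $\lambda_n(TK)=\lambda_n(K)$ already noted in the text. Once these two ingredients are in place, both parts of the lemma are short.
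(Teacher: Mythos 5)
Your proof is correct and follows essentially the same strategy the paper relies on: part (i) is the standard two-point rearrangement inequality (which the paper obtains by citing Baernstein's Theorem 2.8(a)), and part (ii) follows by expanding the square, using equimeasurability of $f$, $Tf$, and $f^\REARR$ to cancel the $L^2$-norm terms, and then integrating (i) over $H^+$ after changing variables via the isometry $\rho$. The only difference is that you supply a self-contained verification of (i) where the paper defers to a reference; the key orientation fact $f^\REARR(x)\geq f^\REARR(\rho x)$ for $x\in H^+$, which you derive from $e\in H^+$ and the displayed distance inequality, is exactly what makes the two-point inequality go through.
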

\begin{proof}
For {\it (i)}, we refer to \cite[Theorem 2.8 (a)]{Baer:2019}, whereas {\it (ii)} follows from integration over $H^+$ and the fact that $T$ and $^\REARR$ preserve $L^2$-norms (cf. \cite[eq.~(2.13)]{Baer:2019}).
%Again, we give the argument of \cite[Lemma 1]{BaerTay:1976}: Let $a_1, a_2, b_1, b_2\in\R$
%be numbers with $b_1\geq b_2$. Using notation from the proof of Lemma \ref{2:lemModCont},
%the following holds:
%\[
%  a_1 b_1 + a_2 b_2 \leq \alpha_1 b_1 + \alpha_2 b_2
%\]
%If $x\in H^+$, we have $f^\REARR(x) \geq f^\REARR(\rho x)$ and thus
%\[
%	f(x)f^\REARR(x) + f(\rho x)f^\REARR(\rho x) \leq
%	Tf(x)f^\REARR(x) + Tf(\rho x)f^\REARR(\rho x),
%\]
%which is {\it (i)}. Integration over $H^+$ yields
%\begin{align*}
%	\int_{\M^n}f(x)f^\REARR(x)\, dx &= \int_{H^+}f(x)f^\REARR(x) + f(\rho x)f^\REARR(\rho x)\, dx \\
%																	&\leq \int_{H^+}Tf(x)f^\REARR(x) + Tf(\rho x)f^\REARR(\rho x)\, %dx 
%																	= \int_{\M^n}Tf(x)f^\REARR(x)\, dx,
%\end{align*}
%hence {\it (ii)} follows from \[\int_{\M^n} |f(x) - f^\REARR(x)|^2\, dx 
%= \int_{\M^n} f(x)^2\, dx - \int_{\M^n}f(x)f^\REARR(x)\, dx + \int_{\M^n}f^\REARR(x)^2\, dx\]
%and the fact that two-point symmetrization preserves $L^2$-norms.
\end{proof}

%We will use the following notation for radially symmetric functions
%$f\colon\M^n\to\R$, that is, $f(x) = f(x(t, u) =: \tilde{f}(t)$, where
%$t = d_{\M^n}(e,x)$.

\section{Auxiliary results}

In this section, we compile results from the literature that are
necessary to complete the proof of our main result. We consider
functionals of the form
\begin{align}\label{3:defI}
  I_\Psi(f_1,\ldots, f_N) = \int_{\M^n}\ldots\int_{\M^n} \Psi(x_1,\ldots, x_N)
                       	    \prod_{i=1}^N f_i(x_i)\, dx_1\ldots dx_N,
\end{align}
for a bounded, measurable function $\Psi\colon(\M^n)^N\to\R^+$ and bounded, 
integrable functions $f_i\colon\M^n\to\R^+$, $1\leq i\leq N$. Sometimes, we 
will also consider the truncated functional
\[
  I^R_\Psi(f_1,\ldots, f_N) = I_\Psi(\IND_{B_R}f_1,\ldots, \IND_{B_R}f_N),
\]
where $B_R$ is the geodesic ball of radius $R>0$ around the origin $e\in\M^n$. 
Clearly, in the case $\M^n = \SP^n$, we have $I^R = I$ whenever $R\geq\pi$.
Integrals of type \eqref{3:defI}, involving monotone functions of pairwise distances $d_{\M^n}(x_i, x_j)$, have already been considered by Morpurgo \cite{Morp:2002}, Burchard and Schmuckenschl\"{a}ger \cite{BurSch:2001}, and Burchard and Hajaiej \cite{BurHaj:2006}. Before treating the functional $\Psi(x_1, \ldots, x_N) = U_1(\CONV\{x_1, \ldots, x_N\})$, we recall a general convergence result for iterated two-point symmetrizations.

\subsection{Achieving radial symmetry}

The following proposition is due to Baernstein and Taylor \cite{BaerTay:1976} 
in the case $\M^n = \SP^n$ (see also \cite[Section 2.5]{Baer:2019}). We 
reproduce their proof at once in all three geometries.

\begin{proposition}\label{3:propTPRadial}
Let $f_1,\ldots, f_N\colon\M^n\to\R^+$ be bounded, integrable functions, and 
let $\Psi(x_1,\ldots, x_N)\colon(\M^n)^N\to\R^+$ be bounded and measurable. 
Furthermore, assume that
\[
  I_\Psi(f_1,\ldots, f_N) \geq I_\Psi(Tf_1,\ldots, Tf_N),
\]
for every two-point symmetrization $T$ in $\M^n$. Then
\[
  I_\Psi(f_1,\ldots, f_N) \geq I_\Psi(f_1^\REARR,\ldots, f_N^\REARR).
\]
\end{proposition}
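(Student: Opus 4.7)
The plan is to iterate two-point symmetrizations and extract a limit that must coincide with the symmetric decreasing rearrangement, then pass the hypothesis through that limit. Throughout, one same sequence of symmetrizations will work simultaneously for all $f_i$.

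First, I would reduce to continuous $f_i$ with proper support (in the spherical case). Since $\Psi$ is bounded and each $f_j$ is bounded and integrable, the functional $I_\Psi$ is continuous under $L^1$-perturbations of each coordinate (Fubini plus the uniform bound on $\Psi$). Both $T$ and the rearrangement $^\REARR$ are $L^1$-contractions, so approximation carries through. In the Euclidean/hyperbolic case one should also truncate to $B_R$ first: note $I_\Psi(f_1,\dots,f_N) = \lim_{R\to\infty} I_\Psi^R(f_1,\dots,f_N)$ by dominated convergence, and likewise for $f_i^\REARR$, while $I_\Psi^R(Tf_1,\dots,Tf_N) = I_\Psi(T(\IND_{B_R} f_1),\dots,T(\IND_{B_R} f_N))$ is still controlled by the hypothesis since $T$ fixes $B_R$ (any two-point symmetrization with $e\in H^+$ maps $B_R$ into itself).

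Next, I would invoke the Baernstein--Taylor construction of a \emph{universal} sequence of two-point symmetrizations $T_1,T_2,\ldots$ such that, writing $S_k:=T_k\circ\cdots\circ T_1$, one has $S_k f\to f^\REARR$ uniformly for every continuous $f$ with proper support. Concretely, enumerate a countable family $\{H_m\}$ of hyperplanes (with $e\in H_m^+$) which is dense in the space of admissible reflections, and cycle through them so that each $H_m$ is used infinitely often. Three facts then combine:
\begin{enumerate}[(a)]
\item By Lemma \ref{2:lemModCont}, $\omega(\delta,S_k f_i)\le \omega(\delta,f_i)$ for all $k$, so $(S_k f_i)_k$ is equicontinuous and uniformly bounded; by Arzel\`a--Ascoli every subsequence has a uniformly convergent sub-subsequence with limit $g_i$.
\item By Lemma \ref{2:lemTPrearr}(ii), $k\mapsto \|S_k f_i - f_i^\REARR\|_2$ is non-increasing, hence convergent. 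Any uniform limit $g_i$ of a subsequence must therefore satisfy $\|T g_i - f_i^\REARR\|_2 = \|g_i - f_i^\REARR\|_2$ for each $T=T_{H_m}$ appearing in our family. Revisiting the proof of Lemma \ref{2:lemTPrearr}(ii), equality in this inequality forces $T g_i = g_i$ a.e.
\item A function invariant under every two-point symmetrization in a dense family of hyperplanes with $e$ on the positive side is necessarily radially symmetric about $e$ and non-increasing in the geodesic distance to $e$. Since $T$ and $^\REARR$ preserve distribution, $g_i$ has the same distribution function as $f_i$, whence $g_i = f_i^\REARR$ a.e.
\end{enumerate}
Thus the full sequence $S_k f_i$ converges to $f_i^\REARR$ uniformly (and in $L^1$).

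Finally, I would iterate the hypothesis to obtain
\[
  I_\Psi(f_1,\dots,f_N)\ge I_\Psi(S_k f_1,\dots,S_k f_N)\qquad\text{for every }k,
\]
and let $k\to\infty$. Since $\Psi$ is bounded and the supports of $S_k f_i$ remain inside a fixed compact set (proper support in $\SP^n$; the truncation $B_R$ in the other cases), uniform convergence together with dominated convergence yields $I_\Psi(S_k f_1,\dots,S_k f_N)\to I_\Psi(f_1^\REARR,\dots,f_N^\REARR)$, and removing the truncation by $R\to\infty$ completes the proof.

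The main obstacle is step (b)--(c): showing that the monotone sequence $\|S_k f_i - f_i^\REARR\|_2$ actually converges to $0$, not to a spurious positive limit. This requires the quantitative equality case of Lemma \ref{2:lemTPrearr}(ii) --- that $Tf=f$ whenever the two-point inequality is an equality --- and the rigidity statement that a function invariant under a dense family of admissible two-point symmetrizations is already radial about $e$. Both steps are geometric and treat the three curvatures uniformly, which is why the same construction serves for $\SP^n$, $\R^n$, and $\HY^n$ simultaneously.
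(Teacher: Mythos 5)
Your overall strategy---approximate, iterate a sequence of two-point symmetrizations, identify the limit as $f^\REARR$, and pass $I_\Psi$ to the limit---is a legitimate alternative to the paper's argument, and is the route taken by Brock--Solynin \cite{BroSol:2000} and Burchard--Fortier \cite{BurFor:2013}. The paper instead follows Baernstein--Taylor: it introduces the compact sets $\mathcal S(f_i)$ of equicontinuous competitors with the same rearrangement, observes that $\mathcal P\subseteq\prod_i\mathcal S(f_i)$ (the tuples satisfying the target inequality) is compact and $T$-invariant, takes a \emph{minimizer} of $\sum_i\|F_i-f_i^\REARR\|_{L^2}$ over $\mathcal P$, and shows it must equal $(f_1^\REARR,\dots,f_N^\REARR)$ by exhibiting one well-chosen $T$ that would otherwise strictly lower the sum. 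This avoids having to produce a convergent iteration at all.

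As written, your argument has a genuine gap in step (b). You know $\|S_kf_i-f_i^\REARR\|_2\searrow\ell$ and that $S_{k_j}f_i\to g_i$ uniformly along \emph{some} subsequence, so $\|g_i-f_i^\REARR\|_2=\ell$. To conclude $\|T_{H_m}g_i-f_i^\REARR\|_2=\ell$, you would need the steps at which $T_{H_m}$ is applied to lie (eventually) along the \emph{same} subsequence $k_j$. They need not: $T_{H_m}$ occurs infinitely often, but the corresponding predecessors $S_{n-1}f_i$ may converge to a different subsequential limit $g_i'$, and then the monotonicity only gives $\|T_{H_m}g_i'-f_i^\REARR\|_2=\|g_i'-f_i^\REARR\|_2$ for that other $g_i'$. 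Consequently, a naive cyclic run through a fixed countable dense family of hyperplanes is not known (and I believe not true in general without further care) to force $S_k f\to f^\REARR$; the standard fixes are adaptive (choose $T_{k+1}$ to nearly minimize $\sum_i\|T\,S_kf_i-f_i^\REARR\|_2$ at each step, as in Brock--Solynin) or probabilistic (random polarizations, Burchard--Fortier). Separately, a smaller imprecision: equality in Lemma~\ref{2:lemTPrearr}(ii) does \emph{not} force $Tf=f$ a.e.; the pointwise inequality in (i) is also an equality whenever $f^\REARR(x)=f^\REARR(\rho x)$, so $Tf$ may differ from $f$ on that set. The correct rigidity step (used in the paper) is to pick a level $t$ with $\{g>t\}\neq\{f^\REARR>t\}$ and build the bisecting hyperplane of a pair of interior points, which yields a set of positive measure where both $f^\REARR(z)>f^\REARR(\rho z)$ and the inequality (i) is strict, hence a strict drop in the $L^2$-distance.
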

\begin{proof}
We follow \cite[Section 2]{BaerTay:1976}. We start with the following facts:
\begin{enumerate}[(i)]
	\item $\IND_{B_R}f_i \to f_i$ in $L^1(\M^n)$ as $R\to\infty$,
	\item $|I_\Psi(f_1,\ldots, f_N)| \leq \|\Psi\|_\infty \prod_{i=1}^N \|f_i\|_{L^1(\M^n)}$,
	\item the map $f\mapsto f^\REARR$ is continuous in $L^1(\M^n)$
	\item there are sequences $(\phi_i^j)_{j\in\N}$ in $C_c(\M^n)$, $\SPT\phi^j_i\subseteq \SPT f_i$, such that $\phi^j_i\to f_i$ in $L^1(\M^n)$
\end{enumerate}
Indeed, (i) follows from dominated convergence, (ii) is a standard estimate, (iii) follows
from the non-expansivity of symmetric decreasing rearrangements 
(see \cite[Theorem 3.5]{LL:Analysis}), and (iv) is an application of Urysohn's lemma.

By (i) and (ii), we can assume without loss of generality that 
$\SPT f_i\subseteq B_R$, and by (ii), (iii) and (iv), we can restrict further 
to continuous functions $f_i$ (supported in $B_R$) and denote this space by 
$C(B_R)\subseteq L^2(B_R)$. We define for $f\in C(B_R)$
\[
  \mathcal{S}(f) := \{F\in C(B_R)\colon \omega(\cdot, F)\leq\omega(\cdot, f)
                   \;\textrm{and}\; F^\REARR=f^\REARR\}.
\]
We show now that $\mathcal{S}(f)$ is a compact subset of $L^2(B_R)$. Since $f$ 
is uniformly continuous, for each $\varepsilon>0$ there exists $\delta>0$, such 
that $\omega(\delta, F) \leq \omega(\delta, f) < \varepsilon$, for all 
$F\in\mathcal{S}(f)$. Moreover, $\|F\|_\infty = \|F^\REARR\|_\infty = 
\|f^\REARR\|_\infty$. Hence, $\mathcal{S}(f)$ is a uniformly equicontinuous, 
uniformly bounded family of continuous functions, thus relatively compact in 
$(C(B_R), \|\cdot\|_\infty)$ by the Arzel\'{a}-Ascoli theorem. Since the map
$f\mapsto f^\REARR$ is continuous also in the $(C(B_R), \|\cdot\|_\infty)$ topology 
(take $p\to\infty$ in \cite[Theorem 3.5]{LL:Analysis}) and the sets 
$\{\omega(\delta, F)\leq\omega(\delta, f)\}$ and therefore
\[
  \{\omega(\cdot, F)\leq\omega(\cdot, f)\} =
  \bigcap_{\delta\in\R^+}
	\left\{\omega(\delta, F)\leq\omega(\delta, f)\right\},
\]
are closed (by the triangle inequality), the set $\mathcal{S}(f)$ is compact in 
$(C(B_R), \|\cdot\|_\infty)$. It is also compact in 
$(C(B_R), \|\cdot\|_{L^2(B_R)})$, since by 
$\|f\|_{L^2(B_R)}^2 \leq \lambda_n(B_R)\|f\|_\infty^2$, the latter space 
has a coarser topology.

Next, by Lemma \ref{2:lemModCont}, $T\mathcal{S}(f) \subseteq \mathcal{S}(f)$ 
for every two-point symmetrization $T$. Since, by the Cauchy--Schwarz inequality, 
we have
\[
  |I_\Psi(f_1,\ldots, f_N)| \leq \lambda_n(B_R)^N \|\Psi\|_\infty 
                            \prod_{i=1}^N \|f_i\|_{L^2(B_R)},
\]
for fixed $f_1,\ldots, f_N\in C(B_R)$, the set
\[
  \mathcal{P} := \{(F_1,\ldots, F_N)\in\mathcal{S}(f_1)\times\cdots\times
                 \mathcal{S}(f_N)\colon I(f_1,\ldots, f_N)\geq I(F_1,\ldots,F_N)\}
\]
is compact in $L^2(B_R)\times\cdots\times L^2(B_R)$. By assumption, 
$\mathcal{P}$ is closed under simultaneous two-point symmetrizations 
$(F_1,\ldots, F_N)\mapsto(TF_1,\ldots, TF_N)$. We are done, if we can show 
that $\mathcal{P}$ contains $(f_1^\REARR,\ldots, f_N^\REARR)$.

By compactness, there exist $(F_1^0,\ldots, F_N^0)\in\mathcal{P}$ such that
\[
  \sum_{i=1}^N\|F_i^0-f_i^\REARR\|_{L^2(B_R)} = 
    \min\left\{\sum_{i=1}^N\|F_i-f_i^\REARR\|_{L^2(B_R)}\colon
          (F_1,\ldots, F_N)\in\mathcal{P}\right\}.
\]
Without loss of generality, assume that $F_1^0 \neq f_1^\REARR$, that is, there 
exists $t>0$ such that
\[
  E_1 := \{F_1^0 > t\} \neq \{f_1^\REARR > t\} =: E_2, \quad E_1, E_2\subseteq B_R.
\]
Since $\lambda_n(E_1)=\lambda_n(E_2)$, there exist 
$x\in\INT (E_1\setminus E_2)$ and $y\in\INT (E_2\setminus E_1)$. Let 
$H\in\mathcal{M}^n_{n-1}$ be the totally geodesic hypersurface that 
orthogonally bisects the geodesic segment $[x, y]$, $\rho$ the reflection 
about $H$, and $T$ the associated two-point symmetrization. Then we can find 
a small geodesic ball $B$ around $y$, such that 
$B\subseteq (E_2\setminus E_1)$ and $\rho B \subseteq (E_1 \setminus E_2)$.

For all $z\in B$, we then have $f_1^\REARR(z)>t\geq f_1^\REARR(\rho z)$ and 
$F_1^0(\rho z)> t\geq F_1^0(z)$, yielding
\[
  F_1^0(z)f_1^\REARR(z) + F_1^0(\rho z)f_1^\REARR(\rho z) < 
  TF_1^0(z)f_1^\REARR(z) + TF_1^0(\rho z)f_1^\REARR(\rho z).
\]
Since, by Lemma \ref{2:lemTPrearr} (a), the same inequality holds with ``$\leq$'' for all 
$z\in B_R$, we get
\[
  \int_{B_R} F_1^0(z)f_1^\REARR(z)\, dz < \int_{B_R} TF_1^0(z)f_1^\REARR(z)\, dz,
\]
and, since $T$ preserves $L^2$-norms,
\begin{align*}
  \int_{B_R} |F_1^0 - f_1^\REARR|^2 
  &= \int_{B_R} (F_1^0)^2 - 2F_1^0f_1^\REARR + (f_1^\REARR)^2 \\
  &> \int_{B_R} (TF_1^0)^2 - 2TF_1^0f_1^\REARR + (f_1^\REARR)^2
  = \int_{B_R} |TF_1^0 - f_1^\REARR|^2.
\end{align*}
Moreover, by \ref{2:lemTPrearr} (b) we have
\[
  \int_{B_R} |F_i^0 - f_i^\REARR|^2 \geq \int_{B_R} |TF_i^0 - f_i^\REARR|^2 
\]
for $2\leq i\leq N$, hence,
\[
  \sum_{i=1}^N\|F_i^0-f_i^\REARR\|_{L^2(B_R)} >
  \sum_{i=1}^N\|TF_i^0-f_i^\REARR\|_{L^2(B_R)}.
\]
which, since $(TF_1^0,\ldots, TF_N^0)\in\mathcal{P}$, is a contradiction.
\end{proof}

\subsection{From rotation invariance to balls}

We explain now how one can progress further from bounded radially
symmetric densities to indicator functions of geodesic
balls. In doing so, we make use of polar coordinates in $\M^n$. First,
we need the following version of \cite[Lemma 3.5]{PP:2012}:

\begin{lemma}\label{lem:bathtub}
Let $f\colon\left[0, R^\M\right]\to\R^+$ be bounded, measurable and assume that
\[
  \int_0^{R^\M} f(t)\SN^{n-1}t\, dt < \infty.
\]
Define $A\in\left[0, R^\M\right]$ such that
\[
  \int_0^{R^\M} f(t)\SN^{n-1}t\, dt = \int_0^A \|f\|_\infty \SN^{n-1}t\, dt.
\]
Then for any increasing function $\phi\colon\left[0, R^\M\right]\to\R^+$,
\[
  \int_0^{R^\M} \phi(t)f(t)\SN^{n-1}t\, dt \geq
  \int_0^{R^\M} \phi(t)\|f\|_\infty\mathbbm{1}_{[0,A]}(t)\SN^{n-1}t\, dt.
\]
\end{lemma}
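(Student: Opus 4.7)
The plan is to use the classical bathtub-principle argument, adapted to the weighted measure $\SN^{n-1}t\, dt$ in place of Lebesgue measure. Write $M := \|f\|_\infty$ and introduce the difference
\[
  h(t) := f(t) - M\,\IND_{[0, A]}(t),
\]
so that the defining relation for $A$ becomes $\int_0^{R^\M} h(t)\SN^{n-1}t\,dt = 0$. The desired inequality is then equivalent to
\[
  \int_0^{R^\M} \phi(t)h(t)\SN^{n-1}t\,dt \geq 0.
\]

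The key structural observation is that $h$ has a definite sign on each side of $A$: on $[0, A]$ one has $h = f - M \leq 0$ because $M = \|f\|_\infty$, while on $(A, R^\M]$ one has $h = f \geq 0$. The idea is to use the monotonicity of $\phi$ to replace $\phi(t)$ by the constant $\phi(A)$ and then exploit the zero-mean condition. Concretely, since $\phi$ is increasing, $\phi(t) \leq \phi(A)$ for $t \in [0, A]$ and $\phi(t) \geq \phi(A)$ for $t \in [A, R^\M]$. Multiplying by $h(t)$ (and flipping the inequality on $[0, A]$ because $h \leq 0$ there), we obtain
\[
  \phi(t) h(t) \geq \phi(A) h(t) \qquad \text{for a.e. } t \in [0, R^\M].
\]
Integrating against $\SN^{n-1}t\,dt$ and invoking $\int_0^{R^\M} h(t)\SN^{n-1}t\,dt = 0$ yields the result.

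A small preliminary point is to guarantee that $A \in [0, R^\M]$ is well defined. This follows from monotonicity and continuity in $A$ of the map $A \mapsto \int_0^A M\SN^{n-1}t\,dt$, which ranges from $0$ to $M\int_0^{R^\M}\SN^{n-1}t\,dt$; the hypothesis $\int_0^{R^\M} f\SN^{n-1}t\,dt < \infty$ together with $f \leq M$ places the target value in this range (with $A = R^\M$ admissible if necessary, e.g.\ when $\M^n = \SP^n$ and the mass is large). I do not anticipate a genuine obstacle in the proof; the only mild subtlety is making sure that the sign analysis of $h$ does not rely on any continuity or strict monotonicity assumptions on $f$ or $\phi$, which it does not — the two pointwise inequalities needed hold simply from $f \geq 0$, $f \leq M$, and $\phi$ being increasing.
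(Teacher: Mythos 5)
Your proof is correct and takes essentially the same approach as the paper: both exploit the monotonicity of $\phi$ by comparing $\phi(t)$ to the pivot value $\phi(A)$ and use the balance condition defining $A$. The paper writes this out by splitting the integral at $A$ and substituting $\int_A^{R^\M} f\,\SN^{n-1}t\,dt = \int_0^A(\|f\|_\infty - f)\SN^{n-1}t\,dt$, while you package the same cancellation into the zero-mean function $h$ and the single pointwise inequality $\phi(t)h(t)\geq\phi(A)h(t)$; the content is identical.
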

\begin{proof}
By the monotonicity of $\phi$ and the positivity of $\SN$ on
$\left[0, R^\M\right]$, we can estimate
\begin{align*}
  \int_0^{R^\M} \phi(t)f(t)\SN^{n-1}t\, dt &= 
		   \int_0^A \phi(t)f(t)\SN^{n-1}t\, dt + \int_A^{R^\M}
		   \phi(t)f(t)\SN^{n-1}t\, dt \\
		&  \geq \int_0^A \phi(t)f(t)\SN^{n-1}t\, dt +
		   \phi(A)\int_A^{R^\M} f(t)\SN^{n-1}t\, dt \\
		&= \int_0^A \phi(t)f(t)\SN^{n-1}t\, dt + 
		   \phi(A)\int_0^A \left(\|f\|_\infty - f(t)\right)
		   \SN^{n-1}t\, dt \\
		&  \geq \int_0^A \phi(t)f(t)\SN^{n-1}t\, dt + 
		   \int_0^A \phi(t)\left(\|f\|_\infty - f(t)\right)\SN^{n-1}t\, dt \\
		&= \int_0^A \phi(t)\|f\|_\infty \SN^{n-1}t\, dt,
\end{align*}
which gives the statement.
\end{proof}

Changing to polar coordinates
$x(t, u) = e\CS t + u \SN t$, $t\in\left[0, R^\M\right]$, $u\in\SP^{n-1}$
(see Section \ref{sec:background}), we can formulate the next proposition.
It is a variant of \cite[Proposition 3.9]{PP:2012}.

\begin{proposition}\label{3:propRadCap}
Let $f_1,\ldots, f_N\colon\M^n\to\R^+$ be bounded, integrable functions, and
let $\Psi(x_1,\ldots, x_N)\colon(\M^n)^N\to\R^+$ be bounded, measurable, such
that the function
\[
  \phi(t_1,\ldots, t_N) = \int_{\SP^{n-1}}\ldots\int_{\SP^{n-1}}
						  \Psi(x(t_1, u_1), \ldots, x(t_N, u_N))\,
						  du_1\ldots du_N
\]
is increasing in each coordinate. Then
\[
  I_\Psi(f_1^\REARR,\ldots, f_N^\REARR) 
  \geq I_\Psi(\|f_1\|_\infty B_1,\ldots, \|f_N\|_\infty B_N),
\]
where $B_i$ is a geodesic ball around $e$ such that
$\lambda_n(B_i) = \frac{\|f_i\|_{L^1(\M^n)}}{\|f_i\|_\infty}$.
\end{proposition}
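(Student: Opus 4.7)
The plan is to pass to polar coordinates, reducing the problem to an $N$-fold integral over radii only, and then to apply the bathtub inequality (Lemma \ref{lem:bathtub}) successively to one radial variable at a time. Since $f_i^\REARR$ is rotation invariant about $e$, we may write $f_i^\REARR(x(t,u)) = g_i(t)$ for a non-increasing, bounded, measurable function $g_i\colon[0,R^\M]\to\R^+$ with $\|g_i\|_\infty=\|f_i\|_\infty$ and, by the polar formula \eqref{eq:polar},
\[
\lambda_{n-1}(\SP^{n-1})\int_0^{R^\M} g_i(t)\SN^{n-1}t\,dt=\|f_i^\REARR\|_{L^1(\M^n)}=\|f_i\|_{L^1(\M^n)}.
\]
Applying \eqref{eq:polar} coordinatewise turns $I_\Psi(f_1^\REARR,\ldots,f_N^\REARR)$ into
\[
\int_0^{R^\M}\!\!\cdots\int_0^{R^\M}\phi(t_1,\ldots,t_N)\prod_{i=1}^N g_i(t_i)\SN^{n-1}t_i\,dt_i,
\]
after performing the $u_i$-integrations, since only $\Psi$ depends on the $u_i$.

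Next I would iterate Lemma \ref{lem:bathtub}. Freeze $t_2,\ldots,t_N$. By hypothesis $t_1\mapsto\phi(t_1,t_2,\ldots,t_N)$ is increasing, so Lemma \ref{lem:bathtub} applied to $g_1$ yields
\[
\int_0^{R^\M}\phi(t_1,\ldots,t_N)g_1(t_1)\SN^{n-1}t_1\,dt_1\geq\int_0^{R^\M}\phi(t_1,\ldots,t_N)\|g_1\|_\infty\IND_{[0,A_1]}(t_1)\SN^{n-1}t_1\,dt_1,
\]
where $A_1\in[0,R^\M]$ is characterized by $\int_0^{R^\M}g_1\SN^{n-1}=\int_0^{A_1}\|g_1\|_\infty\SN^{n-1}$. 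Multiplying by $\lambda_{n-1}(\SP^{n-1})$, this characterization becomes $\|f_1\|_{L^1(\M^n)}=\|f_1\|_\infty\lambda_n(B_1^{A_1})$, identifying $A_1$ as the radius of the ball $B_1$ in the statement. Multiplying by $\prod_{i\geq 2}g_i(t_i)\SN^{n-1}t_i\geq 0$ and integrating in $t_2,\ldots,t_N$ by Fubini delivers the same inequality for the full integral with $g_1$ replaced by $\|g_1\|_\infty\IND_{[0,A_1]}$.

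The main issue is ensuring that the induction on coordinates goes through. Define
\[
\tilde\phi(t_2,\ldots,t_N):=\int_0^{R^\M}\phi(t_1,t_2,\ldots,t_N)\|g_1\|_\infty\IND_{[0,A_1]}(t_1)\SN^{n-1}t_1\,dt_1.
\]
Since $\phi$ is increasing in each of $t_2,\ldots,t_N$ pointwise in $t_1$, and the weight $\|g_1\|_\infty\IND_{[0,A_1]}(t_1)\SN^{n-1}t_1$ is non-negative, $\tilde\phi$ inherits monotonicity in every remaining coordinate. Thus the hypothesis of Lemma \ref{lem:bathtub} is preserved, and I can apply the bathtub argument to $g_2$ with $\tilde\phi$ in place of $\phi$, and then inductively to $g_3,\ldots,g_N$. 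After $N$ steps I obtain
\[
I_\Psi(f_1^\REARR,\ldots,f_N^\REARR)\geq\int\!\!\cdots\!\!\int\phi(t_1,\ldots,t_N)\prod_{i=1}^N\|g_i\|_\infty\IND_{[0,A_i]}(t_i)\SN^{n-1}t_i\,dt_i,
\]
and reversing polar coordinates identifies the right-hand side with $I_\Psi(\|f_1\|_\infty B_1,\ldots,\|f_N\|_\infty B_N)$, completing the proof. The only subtle step is the monotonicity preservation used at each stage of the induction; the remaining bookkeeping (integrability of $g_i\SN^{n-1}$, well-definedness of the $A_i$, applicability of Fubini) follows from the boundedness and integrability hypotheses on the $f_i$ together with the boundedness of $\Psi$.
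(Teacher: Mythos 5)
Your proposal is correct and follows essentially the same route as the paper: pass to polar coordinates via \eqref{eq:polar}, reduce to the radial integral with kernel $\phi$, and then apply Lemma~\ref{lem:bathtub} successively to each radial variable. The one place you elaborate beyond the paper's terse ``applying Lemma~\ref{lem:bathtub} successively to each coordinate'' is in making explicit that the partially integrated kernel $\tilde\phi$ inherits coordinatewise monotonicity from $\phi$ because the indicator weight is non-negative; this is a genuine and worthwhile point to spell out, but it is implicit in the paper's argument rather than a different method.
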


\begin{proof}
We can assume without loss of generality, that already $f_i = f_i^\REARR$, since 
taking rearrangements neither changes $L^1$ nor $L^\infty$ norms. Using 
(\ref{eq:polar}), we obtain
\begin{align*}
  I_\Psi(f_1,\ldots, f_N) &= \int_{\M^n}\ldots\int_{\M^n} \Psi(x_1,\ldots, x_N)
							 \prod_{i=1}^N f_i(x_i)\, dx_1\ldots dx_N, \\
						  &= \int_{\SP^{n-1}}\ldots\int_{\SP^{n-1}}
							 \int_0^{R^\M}\ldots\int_0^{R^\M}
							 \Psi(x(t_1, u_1), \ldots, x(t_N, u_N)) \\
						  &\qquad\times\prod_{i=1}^N f_i(x(t_i, u_i))
						     \SN^{n-1}t_i\,
							 dt_1\ldots dt_N\, du_1\ldots du_N
\end{align*}
By the radial symmetry of $f_i$, we can write
$\tilde{f}_i(t_i) = f_i(x(t_i, u_i))$ to arrive at
\begin{align*}
  I_\Psi(f_1,\ldots, f_N) = \int_0^{R^\M}\ldots\int_0^{R^\M} \phi(t_1,
  \ldots, t_N) \prod_{i=1}^N \tilde{f}_i(t_i)\SN^{n-1}t_i\, dt_1\ldots
  dt_N.
\end{align*}
Now, applying Lemma \ref{lem:bathtub} successively to each coordinate and
noticing that
\[
  \lambda_n(B_i) = \int_0^{A_i} \SN^{n-1} t\, dt 
  				 = \frac{1}{\|f_i\|_\infty}
  				   \int_0^{R^\M}\tilde{f}_i(t)\SN^{n-1}t\, dt
  				 = \frac{\|f_i\|_{L^1(\M^n)}}{\|f_i\|_\infty},
\]
where the $A_i\in\R^+$ come from the lemma, yields the statement.
\end{proof}

\section{Proof of main theorem}

We split the proof of Theorem \ref{1:main-thm} into two parts: first, we show
how to pass from given functions to their symmetric decreasing rearrangements.
In a second step, we further move from radially symmetric, decreasing
functions to (multiples) of indicators of geodesic balls. For positive,
bounded, and integrable functions $f_1,\ldots, f_N$ on $\M^n$ write
\[
  I(f_1,\ldots, f_N) = \int_{\M^n}\ldots\int_{\M^n}
  					   U_1(\CONV\{x_1,\ldots, x_N\})
					   \prod_{i=1}^N f_i(x_i)\, dx_1\ldots dx_N.
\]
\begin{proposition}\label{4:propfRearr}
Let $f_1,\ldots, f_N\colon\M^n\to\R^+$ bounded, integrable.	Then
\[
  I(f_1\ldots, f_N) \geq I(f_1^\REARR\ldots, f_N^\REARR).%,
\]
%with equality, if and only if $f_i = f_i^*$ in $L^1(\M^n)$, for $1\leq i\leq N$.
\end{proposition}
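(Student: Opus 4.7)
The plan is to apply Proposition~\ref{3:propTPRadial}: it suffices to show that for every two-point symmetrization $T = (H, \rho, T)$,
\begin{equation*}
I(f_1,\ldots,f_N) \geq I(Tf_1,\ldots,Tf_N).
\end{equation*}
Fixing such a $T$, I would split each integration over $\M^n$ into its $H^+$- and $H^-$-parts and reflect the $H^-$-parts back to $H^+$ via the isometry $\rho$. Collecting terms yields
\begin{equation*}
I(f_1,\ldots,f_N) = \int_{(H^+)^N} \sum_{\sigma\in\{0,1\}^N} U_1(P_\sigma(y)) \prod_{i=1}^N f_i(\rho^{\sigma_i}y_i)\,dy,
\end{equation*}
where $P_\sigma(y) = \CONV\{\rho^{\sigma_i}y_i : 1\leq i\leq N\}$, with the analogous identity for $Tf$. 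The desired inequality then reduces to a pointwise-in-$y$ comparison of the two sums.

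Next, I would use the integral representation $U_1(K) = \int_{\mathcal{M}^n_{n-1}}\chi(K\cap M)\,dM$ and swap sums and integrals. Convexity of the half-spaces $M^\pm$ gives, almost everywhere in $M$, that $\chi(P_\sigma(y)\cap M)$ equals $1 - \mathbbm{1}(\{\rho^{\sigma_i}y_i\}\subset M^+) - \mathbbm{1}(\{\rho^{\sigma_i}y_i\}\subset M^-)$. After Fubini, and using $\|Tf_i\|_{L^1}=\|f_i\|_{L^1}$ to cancel the ``$1$'' contribution, the task becomes
\begin{equation*}
\int_{\mathcal{M}^n_{n-1}}\Big[\prod_i\int_{M^+} f_i + \prod_i\int_{M^-} f_i\Big]\,dM \leq \int_{\mathcal{M}^n_{n-1}}\Big[\prod_i\int_{M^+}Tf_i + \prod_i\int_{M^-} Tf_i\Big]\,dM.
\end{equation*}

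The main obstacle is this last inequality, which does not hold pointwise in $M$. The remedy is to pair each $M$ with its $\rho$-image $\rho M$: by motion invariance of $\mu^n_{n-1}$ the pairing is legitimate, and the pointwise identity $Tf_i(x) + Tf_i(\rho x) = f_i(x)+f_i(\rho x)$ on $H^+$ forces $\int_{M^+}Tf_i + \int_{(\rho M)^+}Tf_i = \int_{M^+}f_i + \int_{(\rho M)^+}f_i$ for every $M$. After this pairing, the four integrals occurring for the $(M,\rho M)$-pair satisfy the ``extremal'' relations needed to invoke the algebraic product-comparison inequality
\begin{equation*}
\prod_i\alpha_i + \prod_i\beta_i \leq \prod_i A_i + \prod_i B_i\quad\text{whenever } \alpha_i + \beta_i = A_i + B_i,\; A_i\geq\max(\alpha_i,\beta_i),\; B_i\leq\min(\alpha_i,\beta_i),
\end{equation*}
which follows by induction on $N$, with the base case $N=2$ verified by direct expansion and sign analysis. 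This strategy parallels the approach of Gao--Hug--Schneider~\cite{GHS:2003} for the deterministic inequality $U_1(TK)\leq U_1(K)$; once the inequality above is established, the conclusion follows immediately from Proposition~\ref{3:propTPRadial}.
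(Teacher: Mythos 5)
Your approach is genuinely different from the paper's. The paper works at the level of $N$-tuples of points: it decomposes each $K_i$ into $K_i^{\mathrm{sym}}\cup K_i^{\mathrm{fix}}\cup K_i^{\mathrm{mov}}$, introduces the eight tuples $D_1,\ldots,D_4,E_1,\ldots,E_4$, and proves a \emph{pointwise} inequality
$\sum_{j}\chi(\CONV\{D_j\}\cap M)\geq\sum_j\chi(\CONV\{E_j\}\cap M)$
by case analysis on which pairs are split by $M$; it then passes to functions via the layer-cake formula and invokes Proposition~\ref{3:propTPRadial} with $\Psi=\chi(\,\cdot\cap M)+\chi(\,\cdot\cap\rho M)$. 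Your route instead integrates out the $x_i$ first, writes $\chi(\CONV\{x_i\}\cap M)=1-\mathbbm{1}(\text{all in }M^+)-\mathbbm{1}(\text{all in }M^-)$, and reduces the two--point-symmetrization step to the algebraic product inequality $\prod\alpha_i+\prod\beta_i\leq\prod A_i+\prod B_i$. That is indeed the structure of the Gao--Hug--Schneider argument you cite, and the algebraic lemma itself is correct.

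However, there is a genuine gap at the central step. You assert that ``the four integrals occurring for the $(M,\rho M)$-pair satisfy the extremal relations needed'', but this is precisely the nontrivial part, and no argument is given. Concretely, you need that for each fixed $M$ the numbers $A_i=\int_{M^+}Tf_i$ and $B_i=\int_{\rho(M^+)}Tf_i$ satisfy $A_i\geq\max(\alpha_i,\beta_i)$ and $B_i\leq\min(\alpha_i,\beta_i)$ (or the reverse) \emph{with a choice of which is the max and which is the min that is the same for all $i$}. The conservation law $\alpha_i+\beta_i=A_i+B_i$ alone does not give this. What is actually needed is the geometric fact that the ``wedge'' $W=M^+\triangle\rho(M^+)$ is disjoint from $H$ and that its two convex pieces $M^+\setminus\rho(M^+)$ and $\rho(M^+)\setminus M^+$ (which are swapped by $\rho$) lie entirely in $H^+$ and $H^-$ respectively, or vice versa; which case occurs depends on $M$, and it is this dichotomy that forces a consistent extremal ordering across $i$. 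Since $\int_{M^+}Tf_i-\int_{M^+}f_i=\int_{M^+\setminus\rho(M^+)}(Tf_i-f_i)$ and $Tf_i-f_i$ has opposite signs on $H^+$ and $H^-$, the ordering flips with $M$; your phrasing hides this and supplies no proof. A secondary issue: ``canceling the $1$ contribution'' involves $\int_{\mathcal{M}_{n-1}^n}\prod_i\|f_i\|_{L^1}\,dM$, which is infinite in $\R^n$ and $\HY^n$, so the reduction should be phrased as a pointwise-in-$M$ (after pairing with $\rho M$) inequality of differences rather than as an inequality between two integrals that may both diverge. Finally, note that $(\rho M)^+$ need not coincide with $\rho(M^+)$ (it could be $\rho(M^-)$), so the identity you write should be stated for $\rho(M^+)$; the conclusion is unaffected because the two products appearing for $\rho M$ are symmetric in the two halfspaces, but this should be said.
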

\begin{proof}
For bounded, measurable subsets $K_1, \ldots, K_N\subseteq\M^n$ we set
$I(K_1, \ldots, K_N) := I(\IND_{K_1}, \ldots, \IND_{K_N})$. Our first
step is to show that $I(K_1, \ldots, K_N)\geq I(TK_1, \ldots, TK_N)$
for every two-point symmetrization $(H, \rho, T)$. To this end, for
$M\in {\mathcal M}_{n-1}^n$, let
\[
  I(K_1, \ldots, K_N; M) := \int_{K_1}\ldots\int_{K_N}
  							\chi(\CONV\{x_1,\ldots, x_N\}\cap M)\,
  							dx_1\ldots, dx_N.
\]
We want to investigate how the quantity 
$I(K_1, \ldots, K_N; M) + I(K_1, \ldots, K_N; \rho M)$ changes, when 
the $K_i$ are replaced by $TK_i$. Note that  
we have 
\[
  I(K_1, \ldots, K_N; \rho M) = I(\rho K_1, \ldots, \rho K_N; M)
\]
by the $\rho$-invariance of $\chi$. We begin by decomposing each $K_i$
according to the symmetrization
\[
  K_i = \underbrace{(K_i \cap \rho K_i)}_{K_i^\textrm{sym}} \dot{\cup} 
  			\underbrace{\left[(K_i \cap H^+)
  				\setminus K_i^\textrm{sym}\right]}_{K_i^\textrm{fix}} \dot{\cup} 
  			\underbrace{\left[(K_i \cap H^-)
  				\setminus K_i^\textrm{sym}\right]}_{K_i^\textrm{mov}},
\]
that is, $TK_i = K_i^\textrm{sym} \;\dot{\cup}\; K_i^\textrm{fix} \;\dot{\cup}\; \rho K_i^\textrm{mov}$.
Now, let 
$(x_1,\ldots, x_N)\in K_1\times\cdots\times K_N$ and introduce the
following labeling:
\begin{align*}
	\{a_1,\ldots, a_{N_0}\} 
		&:= \{x_i\ST x_i\in K_i^\textrm{sym}, 1\leq i\leq N\} \\
	\{b_1,\ldots, b_{N_1}\} 
		&:= \{x_i\ST x_i\in K_i^\textrm{fix}, 1\leq i\leq N\} \\
	\{c_1,\ldots, c_{N_2}\} 
		&:= \{x_i\ST x_i\in K_i^\textrm{mov}, 1\leq i\leq N\},
\end{align*}
where $N_0 + N_1 + N_2 = N$. For brevity we will use the notation
$\bar{x} := \rho x$ for $x\in\M^n$ and consider the tuples
\begin{align*}
	D_1 := (a_1,\ldots, a_{N_0}, 
			b_1,\ldots, b_{N_1},
			c_1,\ldots, c_{N_2}) 
			&\in \times_{i=1}^N \, K_i, \\
	D_2 := (\bar{a}_1,\ldots, \bar{a}_{N_0},
			b_1,\ldots, b_{N_1},
			c_1,\ldots, c_{N_2}) 
			&\in \times_{i=1}^N \, K_i, \\
	D_3 := (\bar{a}_1,\ldots, \bar{a}_{N_0},
			\bar{b}_1,\ldots, \bar{b}_{N_1},
			\bar{c}_1,\ldots, \bar{c}_{N_2}) 
			&\in \times_{i=1}^N \, \rho K_i, \\
	D_4 := (a_1,\ldots, a_{N_0},
			\bar{b}_1,\ldots, \bar{b}_{N_1},
			\bar{c}_1,\ldots, \bar{c}_{N_2}) 
			&\in \times_{i=1}^N \, \rho K_i,
\end{align*}
and
\begin{align*}
	E_1 := (a_1,\ldots, a_{N_0}, 
			b_1,\ldots, b_{N_1},
			\bar{c}_1,\ldots, \bar{c}_{N_2}) 
			&\in \times_{i=1}^N \, TK_i, \\
	E_2 := (\bar{a}_1,\ldots, \bar{a}_{N_0},
			b_1,\ldots, b_{N_1},
			\bar{c}_1,\ldots, \bar{c}_{N_2}) 
			&\in \times_{i=1}^N \, TK_i, \\
	E_3 := (\bar{a}_1,\ldots, \bar{a}_{N_0},
			\bar{b}_1,\ldots, \bar{b}_{N_1},
			c_1,\ldots, c_{N_2}) 
			&\in \times_{i=1}^N \, \rho TK_i, \\
	E_4 := (a_1,\ldots, a_{N_0},
			\bar{b}_1,\ldots, \bar{b}_{N_1},
			c_1,\ldots, c_{N_2}) 
			&\in \times_{i=1}^N \, \rho TK_i.
\end{align*}
Note that exchanging $c_l$ with $\bar{c}_l$, $1\leq l\leq N_2$, yields 
the mapping
\begin{equation}\label{4:eqCflip}
  D_1 \mapsto E_1, \quad D_2 \mapsto E_2, \quad 
  D_3 \mapsto E_3, \quad D_4 \mapsto E_4,
\end{equation}
whereas exchanging $b_k$ with $\bar{b}_k$, $1\leq k\leq N_1$ induces
\begin{equation}\label{4:eqBflip}
	D_1 \mapsto E_4, \quad D_2 \mapsto E_3, \quad 
	D_3 \mapsto E_2, \quad D_4 \mapsto E_1.
\end{equation}
We claim that
\begin{equation}\label{4:eqPointWise}
  \sum_{i=1}^4\chi(\CONV\{D_i\}\cap M) \geq   \sum_{i=1}^4\chi(\CONV\{E_i\}\cap M)
\end{equation}
%  \begin{aligned}
%  	\chi(\CONV\{D_1\}\cap M) + \chi(\CONV\{D_2\}\cap M) +
%  	\chi(\CONV\{D_3\}\cap M) + \chi(\CONV\{D_4\}\cap M) \geq \\
%  	\chi(\CONV\{E_1\}\cap M) + \chi(\CONV\{E_2\}\cap M) +
%  	\chi(\CONV\{E_3\}\cap M) + \chi(\CONV\{E_4\}\cap M).
%  \end{aligned}
%\end{equation}
for almost all $M\in\mathcal{M}_{n-1}^n$, that is, we tacitly assume
that $x_1,\ldots, x_N$ do not lie on $M$. We will verify the claim by
checking all possible positions of the points $a_j$, $b_k$, $c_l$
relative to $M$. In doing so, we mean by a \emph{pair} of points a
point and its reflection about $H$, that is $x$ and $\bar{x} = \rho
x$. We call a pair of points $x$ and $\bar{x}$ \emph{split} if they
lie on opposite sides of $M$.
\begin{itemize}
	\item \underline{Case 1}: \emph{None of the pairs of $b$'s are
          split.} By (\ref{4:eqBflip}), the terms on both sides of
          (\ref{4:eqPointWise}) are just a permutation of each other,
          thus there is equality in (\ref{4:eqPointWise}).
	
	\item \underline{Case 2}:
	\emph{None of the pairs of $c$'s are split.} By (\ref{4:eqCflip})
	and the same argument as in the first case, we have equality in 
	(\ref{4:eqPointWise}).
	
	\item \underline{Case 3}:
	\emph{There exist split pairs of $b$'s and split pairs of $c$'s.}
	Suppose that $\{b_k, \bar{b}_k\}$, $1\leq k\leq N_1$ and 
	$\{c_l, \bar{c}_l\}$, $1\leq l\leq N_2$ are split. Since
	$b_k, \bar{c}_l\in H^+$ and $\bar{b}_k, c_l\in H^-$, the geodesic
	segments $[b_k, c_l]$ and $[\bar{b}_k, \bar{c}_l]$ intersect in $H$.
	As $M$ divides $\M^n$ into two connected components, $b_k$ and
	$\bar{c}_l$ must lie on one side of $M$, whereas $\bar{b}_k$ and
	$c_l$ must lie on the other. Thus, the left hand side of 
	(\ref{4:eqPointWise}) equals $4$ and the inequality holds.
	\begin{figure}
		\begin{center}
		\resizebox{0.98\linewidth}{!} {
			\begin{tikzpicture}[scale=1.5, xscale=1]	
				% grid
				%\draw[help lines] (-2, -2) grid (2, 2);

				\definecolor{darkgreen}{RGB}{50,110,0}

				% reflection hyperplane
				\draw[dotted] (-2, 0) -- (2, 0) node[right]{$H$};
				\node[above] at (2, 0.8) {$H^+$};
				\node[below] at (2, -0.8) {$H^-$};
	
				% define convex body
				\newcommand\Body{
						plot [smooth cycle, tension=1, rotate=40, scale=1.5]
						coordinates {(1, 0) (0.2, 0.5) (-1, 0) (0.2, -0.5)}
				}
	
				% draw convex body
				%\draw[dotted] \Body;
	
				% draw mirror image of convex body
				%\draw[yscale=-1, dotted] \Body;
	
				% labels
				%\node[above] at (-0.8, -1.45) {$K$};
				%\node[below] at (-0.8, 1.45) {$\rho K$};
	
				% points
				\coordinate (A) at (-0.4, -0.25);
				\coordinate (rA) at (-0.4, 0.25);
				\coordinate (B) at (0.8, 0.6);
				\coordinate (rB) at (0.8, -0.6);
				\coordinate (C) at (-0.9, -0.4);
				\coordinate (rC) at (-0.9, 0.4);
	
				\node at (B) [circle, fill, inner sep=1pt]{};
				\node at (B) [above right]{$b_k$};
				\node at (C) [circle, fill, inner sep=1pt]{};
				\node at (C) [below left]{$c_l$};
				\node at (rB) [circle, fill, inner sep=1pt]{};
				\node at (rB) [below right]{$\bar{b}_k$};
				\node at (rC) [circle, fill, inner sep=1pt]{};
				\node at (rC) [above left]{$\bar{c}_l$};
				\node at (A) [circle, fill, inner sep=1pt]{};
				\node at (A) [below right]{$a_j$};
				\node at (rA) [circle, fill, inner sep=1pt]{};
				\node at (rA) [above right]{$\bar{a}_j$};
				
				% triangles before
				\fill[color=green, opacity=0.2] (A) -- (B) -- (C);
				\draw[thick, color=darkgreen] (A) -- (B) -- (C) -- cycle;
				\fill[color=green, opacity=0.2] (rA) -- (rB) -- (rC);
				\draw[thick, color=darkgreen] (rA) -- (rB) -- (rC) -- cycle;
				\fill[color=green, opacity=0.2] (rA) -- (B) -- (C);
				\draw[thick, color=darkgreen] (rA) -- (B) -- (C) -- cycle;
				\fill[color=green, opacity=0.2] (A) -- (rB) -- (rC);
				\draw[thick, color=darkgreen] (A) -- (rB) -- (rC) -- cycle;
	
				% hyperplanes M
				\draw[dashed]
				(-2, -0.2) node[below right] {$M$} -- (2, 0.5);
	    \end{tikzpicture}
	    \hspace{5mm}
	    \begin{tikzpicture}[scale=1.5, xscale=1]	
	    	% grid
	   	 %\draw[help lines] (-2, -2) grid (2, 2);
	  	  
	 	   \definecolor{darkgreen}{RGB}{50,110,0}
	    
	 	   % reflection hyperplane
	 	   \draw[dotted] (-2, 0) -- (2, 0) node[right]{$H$};
	 	   \node[above] at (2, 0.8) {$H^+$};
	     \node[below] at (2, -0.8) {$H^-$};
	    
	 	   % define convex body
	  	  \newcommand\Body{
	    		plot [smooth cycle, tension=1, rotate=40, scale=1.5]
	    		coordinates {(1, 0) (0.2, 0.5) (-1, 0) (0.2, -0.5)}
	    	}
	    
	    	% draw convex body
	    	%\draw[dotted] \Body;
	    
	    	% draw mirror image of convex body
	    	%\draw[yscale=-1, dotted] \Body;
	    
	   	  % labels
	    	%\node[above] at (-0.8, -1.45) {$K$};
	      %\node[below] at (-0.8, 1.45) {$\rho K$};
	    
	    	% points
	    	\coordinate (A) at (-0.4, -0.25);
		    \coordinate (rA) at (-0.4, 0.25);
		    \coordinate (B) at (0.8, 0.6);
		    \coordinate (rB) at (0.8, -0.6);
		    \coordinate (C) at (-0.9, -0.4);
		    \coordinate (rC) at (-0.9, 0.4);
	    
		    \node at (B) [circle, fill, inner sep=1pt]{};
		    \node at (B) [above right]{$b_k$};
		    \node at (C) [circle, fill, inner sep=1pt]{};
		    \node at (C) [below left]{$c_l$};
		    \node at (rB) [circle, fill, inner sep=1pt]{};
		    \node at (rB) [below right]{$\bar{b}_k$};
		    \node at (rC) [circle, fill, inner sep=1pt]{};
		    \node at (rC) [above left]{$\bar{c}_l$};
		    \node at (A) [circle, fill, inner sep=1pt]{};
	  	  \node at (A) [below right]{$a_j$};
		    \node at (rA) [circle, fill, inner sep=1pt]{};
		    \node at (rA) [above right]{$\bar{a}_j$};
	    
	  	  % triangles after
	    	\fill[color=green, opacity=0.2] (A) -- (B) -- (rC);
	    	\draw[thick, color=darkgreen] (A) -- (B) -- (rC) -- cycle;
	    	\fill[color=green, opacity=0.2] (rA) -- (rB) -- (C);
	    	\draw[thick, color=darkgreen] (rA) -- (rB) -- (C) -- cycle;
	    	\fill[color=green, opacity=0.2] (rA) -- (B) -- (rC);
	    	\draw[thick, color=darkgreen] (rA) -- (B) -- (rC) -- cycle;
	    	\fill[color=green, opacity=0.2] (A) -- (rB) -- (C);
	    	\draw[thick, color=darkgreen] (A) -- (rB) -- (C) -- cycle;
	    
	    	% hyperplanes M
	    	\draw[dashed]
	    	(-2, -0.2) node[below right] {$M$} -- (2, 0.5);
	    \end{tikzpicture}
		}
		\end{center}
		\caption{$\CONV\{D_i\}$ (left) and $\CONV\{E_i\}$ (right) for $i\in\{1,2,3,4\}$.}
	\end{figure}
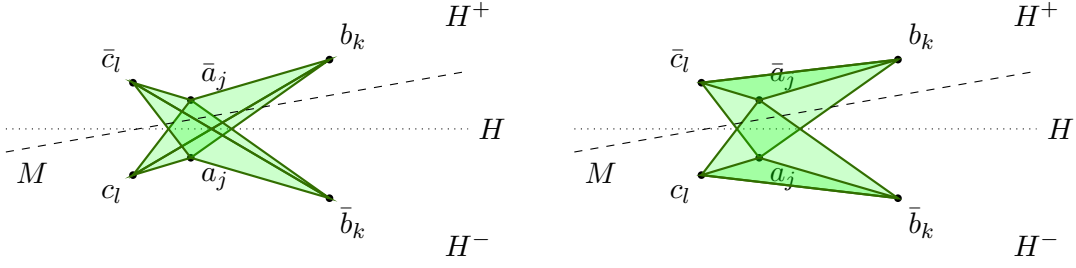
\end{itemize}
Integrating the pointwise inequality \eqref{4:eqPointWise} over
$K_1\times\cdots\times K_N$ yields
\begin{align*}
	2I(K_1, \ldots, &K_N; M) + 2I(K_1, \ldots, K_N; \rho M) \\
	&=
  2I(K_1, \ldots, K_N; M) + 2I(\rho K_1, \ldots, \rho K_N; M) \\
  &\geq
  2I(TK_1, \ldots, TK_N; M) + 2I(\rho TK_1, \ldots, \rho TK_N; M) \\
  &=
  2I(TK_1, \ldots, TK_N; M) + 2I(TK_1, \ldots, TK_N; \rho M),
\end{align*}
that is, the quantity $I(K_1, \ldots, K_N; M) + I(K_1, \ldots, K_N; \rho M)$
decreases whenever the sets $K_1, \ldots, K_N$ are replaced by $TK_1, \ldots, TK_N$.

Our next step is to use the layer-cake formula to generalize the
previous inequality to functions. Let $f_1,\ldots, f_N\colon\M^n\to\R^+$
be bounded integrable functions and set
\[
	I(f_1, \ldots, f_N; M) := \int_{\M^n}\ldots\int_{\M^n}
	\chi(\CONV\{x_1,\ldots, x_N\}\cap M)\prod_{i=1}^N f_i(x_i)\,
	dx_1\ldots, dx_N.
\]
Indeed, we have that
\begin{align*}
	I &(f_1,\ldots, f_N; M) + I(f_1,\ldots, f_N; \rho M)= \\
	& =	
	\int_0^\infty \ldots \int_0^\infty
		I(\{f_1>t_1\}, \ldots, \{f_N>t_N\}; M) \\
		& \hspace{3cm}+ I(\{f_1>t_1\}, \ldots, \{f_N>t_N\}; \rho M)\,
		dt_1\ldots dt_N \\	
	& \geq
	\int_0^\infty \ldots \int_0^\infty
		I(T\{f_1>t_1\}, \ldots, T\{f_N>t_N\}; M) \\
		& \hspace{3cm}+ I(T\{f_1>t_1\}, \ldots, T\{f_N>t_N\}; \rho M)\,
		dt_1\ldots dt_N \\
	& =
	\int_0^\infty \ldots \int_0^\infty
		I(\{Tf_1>t_1\}, \ldots, \{Tf_N>t_N\}; M) \\
		& \hspace{3cm}+ I(\{Tf_1>t_1\}, \ldots, \{Tf_N>t_N\}; \rho M)\,
		dt_1\ldots dt_N \\
	&= I(Tf_1,\ldots, Tf_N; M) + I(Tf_1,\ldots, Tf_N; \rho M).
\end{align*}
Here, we used the layer-cake representation 
$f(x) = \int_0^\infty \mathbbm{1}_{\{f>t\}}(x)\, dt$, identity \eqref{2:eqTPlevel}, 
and the above inequality for sets. We can now apply Proposition \ref{3:propTPRadial}
to the bounded function 
$\Psi(x_1,\ldots, x_N) = \chi(\CONV\{x_1,\ldots, x_N\}\cap M) 
+ \chi(\CONV\{x_1,\ldots, x_N\}\cap \rho M)$
to obtain
\[
  I(f_1,\ldots, f_N; M) + I(f_1,\ldots, f_N; \rho M) \geq
  I(f_1^\REARR,\ldots, f_N^\REARR; M) + I(f_1^\REARR,\ldots, f_N^\REARR; \rho M).
\]
The proof of the inequality is now completed by integrating $M$ over $\mathcal{M}_{n-1}^n$.
%
%Assume now $I(f_1,\ldots, f_N) = I(f_1^*,\ldots, f_N^*)$, then in particular
%\begin{align*}
%  I(\{&f_1>t_1\}, \ldots, \{f_n>t_N\}; M) + I(\{f_1>t_1\} \ldots, \{f_n>t_N\}; \rho M) \\
%  &= I(T\{f_1>t_1\}, \ldots, T\{f_n>t_N\}; M) + I(T\{f_1>t_1\} \ldots, T\{f_n>t_N\}; \rho M),
%\end{align*}
%for every two-point symmetrization $(H, \rho, T)$ and almost every $(t_1, \ldots, %t_N)\in(\R^+)^N$ and $M\in\mathcal{M}_{n-1}^n$. Fix such $t_i$ and set $K_i := \{f_i > t_i\}$,
%$1\leq i\leq N$. Whenever $K_i$.
\end{proof}
%\begin{remark}
	Note, that since Proposition \ref{4:propfRearr} does not require the functions $f_i$ 
	to have proper support in the spherical case, Corollaries \ref{1:main-cor} 
	and \ref{1:cor:randBS} also hold for non-proper sets $K\subseteq\SP^n$.
%\end{remark}

\begin{proposition}
Let $N\in\N$, $f_1,\ldots, f_N\colon\M^n\to\R^+$ bounded,
integrable, and with proper support, if $\M^n = \SP^n$.	Then
\[
  I(f_1^\REARR,\ldots, f_N^\REARR) \geq I(\|f_1\|_\infty B_1, \ldots, \|f_N\|_\infty B_N)
\]
where $B_i$ is a geodesic ball around $e$ such that
$\lambda_n(B_i) = \frac{\|f_i\|_{L^1(\M^n)}}{\|f_i\|_\infty}$.
\end{proposition}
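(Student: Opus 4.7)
The plan is to invoke Proposition \ref{3:propRadCap} with the choice $\Psi(x_1,\ldots,x_N) = U_1(\CONV\{x_1,\ldots,x_N\})$, so that the whole task reduces to showing that the auxiliary function
\[
\phi(t_1,\ldots,t_N) \;=\; \int_{(\SP^{n-1})^N} U_1\bigl(\CONV\{x(t_1,u_1),\ldots,x(t_N,u_N)\}\bigr)\, du_1\cdots du_N
\]
is non-decreasing in each coordinate on the range actually probed by the rearranged densities. In the Euclidean and hyperbolic cases this is $[0,\infty)$; in the spherical case the proper-support hypothesis forces $f_i^\REARR$ to be supported in a geodesic ball of radius at most $\pi/2$ around $e$, so it suffices to have monotonicity on $[0,\pi/2]$.

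The first step is Fubini: swap the $u_i$-integrals with the integral over $\mathcal{M}^n_{n-1}$ hidden inside $U_1$. Introducing
\[
p_i(t,M) \;:=\; \frac{1}{|\SP^{n-1}|}\int_{\SP^{n-1}} \IND_{H^+(M)}\bigl(x(t,u)\bigr)\, du
\]
for the fraction of the geodesic sphere $\partial B_t(e)$ lying in the closed half-space $H^+(M)$ containing $e$, and noting that $\chi(\CONV\{x_i\}\cap M) = 1 - \IND(\text{all }x_i\in\INT H^+) - \IND(\text{all }x_i\in\INT H^-)$ a.e., the independence of the $u_i$'s gives
\[
\phi(t_1,\ldots,t_N) \;=\; |\SP^{n-1}|^N \int_{\mathcal{M}^n_{n-1}} \Bigl[1 - \prod_{i=1}^N p_i(t_i,M) - \prod_{i=1}^N \bigl(1-p_i(t_i,M)\bigr)\Bigr]\, dM.
\]

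The central technical claim is that, for a.e.\ $M$ (the exception being the measure-zero set $\{M\ni e\}$) and each $i$, the quantity $p_i(t,M)$ satisfies (a) $p_i(t,M)\geq\tfrac12$ and (b) $t\mapsto p_i(t,M)$ is non-increasing on the relevant range. For (a) the clean geometric point is that the curve $\{x(s,u):s\in[-t,t]\}$ passes through $e$, and in the Euclidean and hyperbolic cases (as well as in the spherical case when $t\leq\pi/2$) it is the minimizing geodesic between its endpoints $x(\pm t,u)$; geodesic convexity of $H^-(M)$ then forces at least one of $x(\pm t,u)$ to lie in $H^+(M)$, so pairing $u$ with $-u$ under the integral yields $p_i\geq\tfrac12$. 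For (b) I would argue by direct computation: writing the normal vector to $M$ as $v = \CS\beta\cdot e + \SN\beta\cdot w$ with $w\in\SP^{n-1}$, the condition $x(t,u)\in H^+(M)$ translates to the inner product $u\cdot w$ exceeding an explicit threshold (in terms of $\CS t, \SN t, \CS\beta, \SN\beta$) that is monotone in $t$, whence so is $p_i$.

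With (a) and (b) in hand, the function $F(p_1,\ldots,p_N) = \prod p_i + \prod(1-p_i)$ defined on $[\tfrac12,1]^N$ satisfies $\partial F/\partial p_i = \prod_{j\neq i} p_j - \prod_{j\neq i}(1-p_j)\geq 0$, so it is non-decreasing in each coordinate; composed with the non-increasing maps $t_i\mapsto p_i(t_i,M)$, the bracketed integrand above is non-decreasing in each $t_i$ pointwise in $M$, and integration in $M$ preserves this, yielding $\phi$ non-decreasing as required. Proposition \ref{3:propRadCap} then delivers the conclusion. The main obstacle I anticipate is the verification of (a) and (b) uniformly in the three geometries, together with the realization that the proper-support hypothesis is exactly what keeps $t_i\leq\pi/2$ in the spherical case: without it, $p_i$ dips below $\tfrac12$ for $t_i>\pi/2$, the monotonicity of $F$ breaks down pointwise in $M$, and the argument really does fail.
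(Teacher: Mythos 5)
Your proposal is correct, and it takes a genuinely different route from the paper. The paper applies Proposition~\ref{3:propRadCap} separately for each fixed $M\in\mathcal{M}^n_{n-1}$ with the symmetrized integrand $\Psi(\cdot) = \chi(\CONV\{\cdot\}\cap M)+\chi(\CONV\{\cdot\}\cap M^e)$, where $M^e$ is the reflection of $M$ through $e$; it then introduces the four reflected tuples and the auxiliary function $\alpha=\alpha_1+\alpha_2+\alpha_3+\alpha_4$ and verifies monotonicity by a geometric case analysis (Cases~1, 2a--2d) depending on whether $e\in X=\CONV\{x_2,\ldots,x_N\}$ and how $M$ sits relative to $X$ and $X^e$. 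You instead apply the same proposition directly with $\Psi = U_1(\CONV\{\cdot\})$ and replace the case analysis by a probabilistic computation: Fubini over the $u_i$'s, independence gives the product formula with the hemisphere-fractions $p_i(t,M)$, and the elementary calculus fact that $F(p_1,\ldots,p_N)=\prod p_i + \prod(1-p_i)$ is coordinate-wise non-decreasing on $[\tfrac12,1]^N$ does the rest. Your approach shows that the $M\mapsto M^e$ symmetrization the paper performs is actually unnecessary --- the per-$M$ integrand is already monotone in each $t_i$ --- and it makes the role of the proper-support hypothesis transparent as the condition $p_i\geq\tfrac12$. The threshold computation you defer ($u\cdot w$ versus $-\cot t\cot\beta$ on the sphere, $d/t$ in Euclidean space, $\coth t\tanh\beta$ in hyperbolic space) works out exactly as you predict in all three geometries, so the gap in the write-up is one of detail rather than substance.
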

\begin{proof}
We use polar coordinates around $e\in\M^n$ (see Section \ref{sec:background}),
\[
  x(t, u) = e\CS t + u \SN t, \qquad t\in\left[0, R^\M\right], u\in\SP^{n-1},
\]
and appeal to Proposition \ref{3:propRadCap}. In doing so, we will
justify monotonicity in each coordinate of the following function:
\begin{align*}
  \phi(t_1,\ldots, t_N) = \int_{\SP^{n-1}}\ldots \int_{\SP^{n-1}}
  &\chi\left(\CONV\{(x(t_1, u_1), \ldots, x(t_N, u_N)\} \cap M\right) \\
  +\, &\chi\left(\CONV\{(x(t_1, u_1), \ldots, x(t_N, u_N)\} \cap M^e\right)
  \, du_1\ldots du_N,
\end{align*}
where $M\in\mathcal{M}^n_{n-1}$ is fixed and $x^e := -x + (x\cdot e)e$ denotes 
the geodesic reflection of $x\in\M^n$ about $e$, that is, orthogonal reflection
about $\SPAN\{e\}$ in $\R^{n+1}$.

Without loss of generality, we show that $\phi$ is increasing in $t_1 =: t$.
We fix $t_2,\ldots, t_N$ and $u_1,\ldots, u_N$ and write $x(t) := x(t, u_1)$ and
$x_i := x(t_i, u_i)$, $2\leq i\leq N$. Define
\begin{align*}
  \alpha_1(t) &:= \chi(\CONV\{x(t), x_2,\ldots, x_N\}\cap M), &
  \alpha_2(t) &:= \chi(\CONV\{x(t)^e, x_2,\ldots, x_N\}\cap M), \\
	\alpha_3(t) &:= \chi(\CONV\{x(t), x_2^e,\ldots, x_N^e\}\cap M), &
	\alpha_4(t) &:= \chi(\CONV\{x(t)^e, x_2^e,\ldots, x_N^e\}\cap M),	
\end{align*}
and set $\alpha(t) := \alpha_1(t) + \alpha_2(t) + \alpha_3(t) + \alpha_4(t)$. 
Note that we have $\alpha_1 = \alpha_4^e$ and $\alpha_2 = \alpha_3^e$.
Our goal is to show that the function 
$\alpha\colon\left[0, R^\M \right]\to\{0,1,2,3,4\}$ is increasing in $t$. 
We denote by $X:=\CONV\{x_2,\ldots, x_N\}$ and consider the following cases:
\begin{itemize}
	\item\underline{Case 1}: $e\in X$, and thus, $e\in X^e$. For 
	$s\leq t$, we have $[e, x(s)] \subseteq [e, x(t)]$ as geodesic segments. 
	Therefore
	\[
	  \alpha_1(s) = \chi(\CONV\{[e, x(s)]\cup X\}\cap M) 
	    \leq \chi(\CONV\{[e,x(t)]\cup X\} \cap M)
	    = \alpha_1(t), 
	\]
	and similarly for $\alpha_2, \alpha_3, \alpha_4$, hence, 
	$\alpha(s)\leq\alpha(t)$.
	
	\item\underline{Case 2}: $e\notin X$, and thus, $e\notin X^e$.
	\begin{itemize}
		\item\underline{Case 2a}: \emph{M meets both $X$ and $X^e$}. 
		Here, $\alpha(t)\equiv 4$.
		
		\item\underline{Case 2b}: \emph{M meets $X$ but not $X^e$}. We first 
		show that in this case $e$ must lie on the same side of $M$ as $X^e$. 
		Assume the opposite, that is, $e$ lies opposite of $X^e$. Since $M$
		meets $X$, there exist points of $X$ on either side of $M$. Therefore, 
		we find $y\in X$ lying opposite of $e$. But then $y^e\in X^e$, and thus 
		the segment $[y, y^e]$ also lies opposite of $e$. This is a
		contradiction, as $e\in[y, y^e]$ (here we use the assumption
		that, if $\M^n = \SP^n$, the functions $f_1,\ldots, f_N$ have proper 
		support, and thus their rearrangements, $f^\REARR_1,\ldots, f^\REARR_N$ 
		are supported in $\INT\SP_e^+$).
		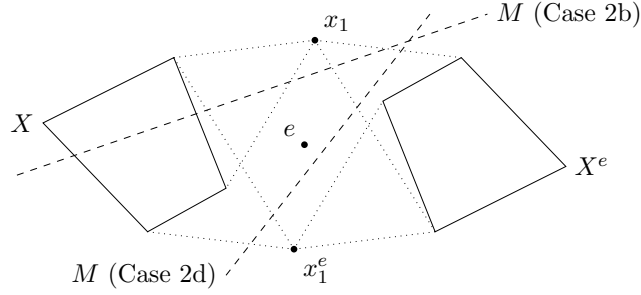
\begin{figure}
		\begin{center}
			\resizebox{90mm}{!} {
				\begin{tikzpicture}[scale=0.65, xscale=1.2]
				% boundary
				\path (-6, -3.5) -- (6, 3.5);
		
				% grid
				%\draw[help lines] (-6, -3.5) grid (6, 3.5);
		
				% origin
				\node at (0, 0) [circle, fill, inner sep=1pt]{};
				\node at (0, 0) [above left]{$e$};
		
				% X
				\draw (-1.5, -1) -- (-2.5, 2) -- (-5, 0.5) node[left] {$X$}-- (-3, -2) -- cycle;
				% X^e
				\draw (1.5, 1) -- (2.5, -2) -- (5, -0.5) node[right] {$X^e$}-- (3, 2) -- cycle;
		
				% x_1
				\coordinate (X1) at (0.2, 2.4);
				\node at (X1) [circle, fill, inner sep=1pt]{};
				\node at (X1) [above right]{$x_1$};
		
				% x_1^e
				\coordinate (X1e) at (-0.2, -2.4);
				\node at (X1e) [circle, fill, inner sep=1pt]{};
				\node at (X1e) [below right]{$x_1^e$};
		
				% convex hulls
				\draw[dotted] (-2.5, 2) -- (X1) -- (-1.5, -1);
				\draw[dotted] (2.5, -2) -- (X1e) -- (1.5, 1);
		
				\draw[dotted] (-2.5, 2) -- (X1e) -- (-3,-2);
				\draw[dotted] (2.5, -2) -- (X1) -- (3,2);
				
				% hyperplanes
				\draw[dashed] (-5.5, -0.7) -- (3.5, 3) node[right] {$M$ (Case 2b)};
				\draw[dashed] (-1.5, -3) node[left] {$M$ (Case 2d)} -- (2.4, 3);		
				\end{tikzpicture}
			}
		\end{center}
		\caption{Different positions of the hypersurface $M$ in Cases 2b and 2d.}
		\end{figure}
		Hence, for $t$ small enough, we have
		\[
		\alpha_1(t) = 1, \quad \alpha_2(t) = 1, \quad
		\alpha_3(t) = 0, \quad \alpha_4(t) = 0,
		\]
		that is, $\alpha(t)=2$. As $t$ increases, as soon as either $x(t)$ or 
		$x(t)^e$ cross $M$, $\alpha_3(t) = 1$ or $\alpha_4(t) = 1$, that is, 
		$\alpha(t) = 3$. 
		%Finally, whenever $x(t)^e$ or $x(t)$ also cross $M$ 
		%(which is possible in the case $\M^n = \SP^n$), also 
		%$\alpha_4(t) = 1$ 
		%or $\alpha_3(t) = 1$, respectively, that is $\alpha(t) = 4$.
		
		\item\underline{Case 2c}: \emph{M meets $X^e$ but not $X$} is similar 
		to Case 2b.
		
		\item\underline{Case 2d}: \emph{M meets neither $X$ nor $X^e$}. If $X$
		and $X^e$ lie on opposite sides of $M$, then $\alpha(t)\equiv 2$ is 
		essentially constant, as $\alpha_1(t) + \alpha_3(t)\equiv 1$ and
		$\alpha_2(t) + \alpha_4(t)\equiv 1$ (except for at most one value of
		$t$, where $x_1$ or $x_1^e$ might lie on $M$).
		If $X$ and $X^e$ lie on the same side of $M$, then so does 
		$e$, and thus, $\alpha(t) = 0$ for small $t$, and $\alpha(t) = 2$, as 
		soon as $x(t)$ or $x(t)^e$ cross $M$, since then 
		$\alpha_1(t) = 1$, $\alpha(t)_3 = 1$ or 
		$\alpha(t)_2 = 1$, $\alpha(t)_4 = 1$, respectively.
		%Again, in the case 
		%$\M^n = \SP^n$, as soon as $x(t)^e$ or $x(t)$ also cross $M$, 
		%we end up 
		%with $\alpha(t) = 4$.
	\end{itemize}
\end{itemize}
Since the above shows that $\alpha(t)$ is either increasing or constant a.e.\
for every choice of 
$u_1, \ldots, u_N$, integrating over $\SP^{n-1}\times\cdots\times\SP^{n-1}$ 
yields that 
\[
	2\phi(t, t_2, \ldots, t_N) =
	\int_{\SP^{n-1}}\ldots\int_{\SP^{n-1}}
	\alpha(t)\, du_1\ldots du_N     
\]
is increasing in $t$ as well. Hence, an application of Proposition 
\ref{3:propRadCap} gives
\begin{align*}
	I(f_1^\REARR, &\ldots, f_N^\REARR; M) + 
	I(f_1^\REARR,\ldots, f_N^\REARR; M^e) \\
	&\geq 
	I(\|f_1\|_\infty B_1, \ldots, \|f_N\|_\infty B_N; M) +
	I(\|f_1\|_\infty B_1, \ldots, \|f_N\|_\infty B_N; M^e)
\end{align*}
Once again, integrating $M$ over $\mathcal{M}_{n-1}^n$ concludes the proof.
\end{proof}

\vspace{0.6cm}

\noindent {{\bf Acknowledgments} T.\ Hack was
	supported by the European Research Council (ERC), Project number: 306445, and the Austrian Science Fund (FWF), Project numbers:
	Y603-N26 and P31448-N35. 
	P.\ Pivovarov was supported by the NSF grant DMS-1612936.

\footnotesize

\begin{small}
	\[ \begin{array}{ll}
	\mbox{Thomas Hack} \hspace{130pt}      & \mbox{Peter Pivovarov} \\
	\mbox{Vienna University of Technology} & \mbox{University of Missouri} \\
	\mbox{thomas.hack@tuwien.ac.at}        & \mbox{pivovarovp@missouri.edu}
	\end{array}\]	
\end{small}
	
\end{document}